\newtheorem{theorem}{Theorem}[section]
\newtheorem{definition}[theorem]{Definition}
\newtheorem{remark}[theorem]{Remark}
\newcommand{\curl}{\bm{\mathrm{curl}}}
\numberwithin{equation}{section}
\begin{document}
\title{High accuracy analysis of a nonconforming discrete Stokes complex over rectangular meshes\thanks{This project is supported by NNSFC (Nos.~61733002, 61572096, 61432003, 61720106005, 61502107)
and ``the Fundamental Research Funds for the Central Universities''. }}

\author[a]{Xinchen Zhou\thanks{Corresponding author: dasazxc@gmail.com}}
\author[b]{Zhaoliang Meng}
\author[c]{Xin Fan}
\author[b,c]{Zhongxuan Luo}

\affil[a]{\it \small Faculty of Electronic Information and Electrical Engineering, Dalian University of Technology, Dalian 116024, China}
\affil[b]{\it \small School of Mathematical Sciences, Dalian
University of Technology, Dalian, 116024, China}
\affil[c]{\it School of Software, Dalian
University of Technology, Dalian, 116620, China}

\maketitle
\begin{abstract}
This work is devoted to the high accuracy analysis of a discrete Stokes complex
over rectangular meshes with a simple structure.
The 0-form in the complex is a non $C^0$ nonconforming element space for biharmonic problems.
This plate element contains only 12 degrees of freedom (DoFs) over a rectangular cell
with a zeroth order weak continuity for the normal derivative,
therefore only the lowest convergence order can be obtained by a standard consistency error analysis.
Nevertheless, we prove that, if the rectangular mesh is uniform,
an $O(h^2)$ convergence rate in discrete $H^2$-norm will be achieved.
Moreover, based on a duality argument,
it has an $O(h^3)$ convergence order in discrete $H^1$-norm if the solution region is convex.
The 1-form and 2-form constitute a divergence-free pair for incompressible flow.
We also show its higher accuracy than that derived from a usual error estimate under uniform partitions,
which explains the phenomenon observed in our previous work \cite{Zhou2018}.
Numerical tests verify our theoretical results.
\\[6pt]
\textbf{Keywords:} High accuracy analysis; discrete Stokes complex; uniform rectangular meshes; dual error estimate
\end{abstract}

\section{Introduction}

It is generally not an easy task to construct finite elements for biharmonic problems with high accuracy but a simple structure.
Although there are several conforming elements achieving high order convergence rate,
such as the Argyris element \cite{Argyris1968},
the Bogner-Fox-Schmit (BFS) element \cite{Bogner1965}, the Hsieh-Clough-Tocher (HCT) element \cite{Clough1965}
and the Fraijes de Veubeke-Sander (FVS) element \cite{Ciavaldini1974, Verbeke1968},
they suffer from the drawbacks caused by the strong $C^1$ continuity requirement.
For example, many degrees of freedom (DoFs) have to be utilized,
the degrees of the polynomials in shape function spaces must be very high,
or the structures of shape function spaces are too complicated.
An alternative way is to adopt nonconforming elements.
A successful construction is the Adini element on rectangular meshes,
whose DoFs are of the nodal type at the vertices of a mesh, resulting in fewer total DoFs.
In \cite{Lascaux1975}, it was shown that a second order convergence rate can be guaranteed in energy norm if the rectangular cells are of the same size, which is based on the interior symmetry of a rectangle.
Thereafter, Luo and Lin \cite{Luo2004} pointed out that the result above is still valid without such a uniformity assumption.
This fact was recently extended to higher-dimensional cases in \cite{Hu2016}.
However, one can only expect the same convergence order in discrete $H^1$- and $L^2$-norms due to the lower bound estimate given by Hu et al.~\cite{Hu2013, Hu2016}.
In recent years, the design for nonconforming elements draw rapidly increasing attention.
The bubble function technique has become a standard tool to achieve a high convergence order
if the desired element is $C^0$-continuous \cite{Gao2011,Wang2012,Chen2012,Chen2013}.
There are also some completely nonconforming constructions utilizing a high order inter-element continuity \cite{Zhang2018}.
In addition, we remark that the purpose of high accuracy can also be achieved by nonstandard methods such as the double set parameter (DSP) method \cite{Shi2000} or superconvergence analysis and postprocessing for low order elements \cite{Mao2009,Hu2016b}.

Biharmonic elements are often used to design divergence-free Stokes elements with discrete Stokes complexes.
A divergence-free Stokes element is usually preferred than a non divergence-free one,
as the former benefits from many advantages, say,
the conservation law is preserved numerically for incompressible flow,
the discrete scheme seems more robust and accurate with respect to the time discretization.
A comprehensive review of this topic can be found in \cite{John2016}.
Conforming examples include the Stokes elements and the discrete Stokes complexes
derived from the Argyris element \cite{Falk2013}, the BFS element \cite{Neilan2016},
the HCT element \cite{Christiansen2018} and the FVS element \cite{Neilan2018}, etc.
High order nonconforming complexes such as in \cite{Guzman2012} and \cite{Zhang2018} are also successful constructions.
Recently, the de Rham complex from the Adini element was also shown by Gillette et al.~\cite{Gillette2018} as a member of a nonstandard family.

In 1996, in order to construct a family of biharmonic elements by the DSP method,
Chen and Shi \cite{Chen1996} designed an intermediate rectangular element of 12 local DoFs,
which are the four point values, the four edge integrals of the shape functions
and the edge integrals of their normal derivatives.
The shape function space is selected as $P_3$ plus a space spanned by two monomials of degree four.
It was then used as a non $C^0$ nonconforming element for fourth order elliptic singular perturbation problems
with a uniform convergence property \cite{Chen2005}.
Very recently, we extended this method to general convex quadrilateral meshes
with some slight modifications to the shape function space,
and induced a divergence-free Stokes element for the Brinkman problem \cite{Zhou2018}.
The associated discrete Stokes complex was also provided.
Based on a standard consistency error estimate,
only the lowest order convergence rates of this complex can be derived.
Surprisingly, in our numerical tests,
we found that the velocity has an $O(h^2)$ approximation order in energy norm if the mesh consists of uniform rectangles,
one order higher than that given by the theoretical analysis therein.
Moreover, as far as we are aware,
although there have been many researches on the DSP method motivated by \cite{Chen1996},
the high accuracy analysis for the 12-DoF intermediate rectangular element introduced in \cite{Chen1996} is still unknown.

The aim of this work is to give a high accuracy analysis of the intermediate rectangular element given in \cite{Chen1996}
and the induced discrete Stokes complex.
We prove, by the special properties of the shape function space, that if the rectangular mesh is uniform,
this plate element has actually an $O(h^2)$ convergence rate in discrete $H^2$-norm
if it works for the biharmonic problem.
In addition, with the aid of an auxiliary biharmonic element and through a duality argument,
an $O(h^3)$ convergence order in discrete $H^1$-norm can be achieved,
provided that the solution region is convex.
In comparison with the well-known Adini element,
the numbers of local DoFs are the same.
Although there are more global DoFs,
this element space is highly nonconforming with many edge-oriented basis functions,
enjoying a low brand width when the stiffness matrix is accumulated.
This might be more convenient in implementation, especially for parallel computing.
Moreover, the convergence rate in discrete $H^1$-norm is one order faster than the Adini counterpart.
From this element, a nonconforming discrete complex is presented using the strategy in \cite{Zhou2018}.
The 1-form and 2-form constitute a divergence-free Stokes pair for incompressible flow.
Under uniform partitions, we also show its higher accuracy than that derived from a usual error estimate,
in both energy norm and $L^2$-norm for the velocity.
The convergence order of the pressure can be improved by a simple postprocessing.
This complex has the same key features as that provided in our previous work \cite{Zhou2018} over uniform rectangular meshes,
and so a rigorous explanation for the phenomenon observed in \cite{Zhou2018} is obtained.

The rest of this work is arranged as follows.
In Section \ref{s: stokes complex},
we give an overview of the investigated discrete Stokes complex.
Section \ref{s: scalar} provides the high accuracy analysis of the 0-form for biharmonic problems.
The error estimate for the Stokes pair determined by the 1-form and 2-form will be presented in Section \ref{s: vector}.
Finally, numerical examples are given in Section \ref{s: numer ex} to verify our theoretical analysis.

Standard notations in Sobolev spaces are used throughout this work.
For a domain $D\subset\mathbb{R}^2$,
$\bm{n}$ and $\bm{t}$ will be the unit outward normal and tangent vectors on $\partial D$, respectively.
The notation $P_k(D)$ denotes the usual polynomial space over $D$ of degree no more than $k$.
The norms and semi-norms of order $m$ in the Sobolev spaces $H^m(D)$
are indicated by $\|\cdot\|_{m,D}$ and $|\cdot|_{m,D}$, respectively.
The space $H_0^m(D)$ is the closure in $H^m(D)$ of $C_0^{\infty}(D)$.
The relation between $\bm{H}_0(\mathrm{div};\Omega)$ and $\bm{H}(\mathrm{div};\Omega)$ is in a similar manner.
We also adopt the convention that $L^2(D):=H^0(D)$,
where the inner-product is denoted by $(\cdot,\cdot)_D$.
The functions in its subspace $L_0^2(D)$ are of zero integral.
These notations of norms, semi-norms and inner-products also work for vector- and matrix-valued Sobolev spaces,
where the subscript $\Omega$ will be omitted if the domain $D=\Omega$.
Moreover, $C$ is a positive constant independent of the mesh size $h$ and may be different in different places.

\section{Preliminaries: A nonconforming discrete Stokes complex}
\label{s: stokes complex}

Let $\Omega\subset\mathbb{R}^2$ be a polygonal domain and $\partial\Omega$ be its boundary.
We assume that $\Omega$ can be uniformly partitioned into rectangular cells,
denoted by $\mathcal{T}_h$, with $h$ being the length of the diagonal of each rectangle.
For a cell $K\in\mathcal{T}_h$, the vertices are designated as $V_{i,K}$, $i=1,2,3,4$.
We select the coordinate system fulfilling that $V_{1,K}=(x'_K,y'_K)$, $V_{2,K}=(x''_K,y'_K)$,
$V_{3,K}=(x''_K,y''_K)$, $V_{4,K}=(x'_K,y''_K)$.
The edges $E_{i,K}=V_{i,K}V_{i+1,K}$ are parallel to the coordinate axes, whose lengths are naturally given by
\[
h_x=|E_{1,K}|=|E_{3,K}|=x''_K-x'_K,~h_y=|E_{2,K}|=|E_{4,K}|=y''_K-y'_K.
\]
Note that here $i$ is taken modulo four.
We can now give the following two elements over $K$.
See Figure \ref{fig: elements} as an example.

\begin{definition}
(See also \cite{Chen1996}) The element $(K,W_K,T_K)$ is defined through
\begin{itemize}
\setlength{\itemsep}{-\itemsep}
\item $K\in\mathcal{T}_h$ is a rectangle;
\item $W_K=P_3(K)\oplus\mathrm{span}\{x^4,y^4\}$ is the shape function space;
\item $T_K=\{\tau_{i,K},~i=1,2,\ldots,12\}$ is the DoF set where
\[
\begin{aligned}
\tau_{i,K}(w)&=w(V_{i,K}),~\tau_{i+4,K}(w)=\int_{E_{i,K}}w\,\mathrm{d}s,\\
\tau_{i+8,K}(w)&=\int_{E_{i,K}}\frac{\partial w}{\partial\bm{n}}\,\mathrm{d}s,
~\forall w\in W_K,~i=1,2,3,4.
\end{aligned}
\]
\end{itemize}
\end{definition}

\begin{definition}
The element $(K,\bm{V}_K,\Sigma_K)$ is defined through
\begin{itemize}
\setlength{\itemsep}{-\itemsep}
\item $K\in\mathcal{T}_h$ is a rectangle;
\item $\bm{V}_K$ is the shape function space, where
\[
\bm{V}_K=[P_1(K)]^2\oplus\mathrm{span}\{\curl\,x^3,\curl\,x^2y,\curl\,xy^2,\curl\,y^3,\curl\,x^4,\curl\,y^4\};
\]
\item $\Sigma_K=\{\sigma_{i,K},~i=1,2,\ldots,12\}$ is the DoF set:
\[
\begin{aligned}
\sigma_{i,K}(\bm{v})&=\int_{E_{i,K}}\bm{v}\cdot\bm{n}\,\mathrm{d}s,
~\sigma_{i+4,K}(\bm{v})=\int_{E_{i,K}}\bm{v}\cdot\bm{n}\xi_{i,K}\,\mathrm{d}s,\\
\sigma_{i+8,K}(\bm{v})&=\int_{E_{i,K}}\bm{v}\cdot\bm{t}\,\mathrm{d}s,
~\forall \bm{v}\in \bm{V}_K,~i=1,2,3,4,
\end{aligned}
\]
where $\xi_{i,K}\in P_1(K)$ are monomials in variable $x$ or $y$ such that
\[
\begin{aligned}
\xi_{1,K}&=\xi_{3,K},~\xi_{1,K}(V_{2,K})=\xi_{1,K}(V_{3,K})=-\xi_{1,K}(V_{1,K})=-\xi_{1,K}(V_{4,K})=1,\\
\xi_{2,K}&=\xi_{4,K},~\xi_{2,K}(V_{3,K})=\xi_{2,K}(V_{4,K})=-\xi_{2,K}(V_{1,K})=-\xi_{2,K}(V_{2,K})=1.
\end{aligned}
\]
\end{itemize}
\end{definition}

\begin{figure}[!htb]
\centering
\subfigure[DoFs of the element $(K,W_K,T_K)$] {
\label{fig: element 1}
\begin{overpic}[scale=0.4]{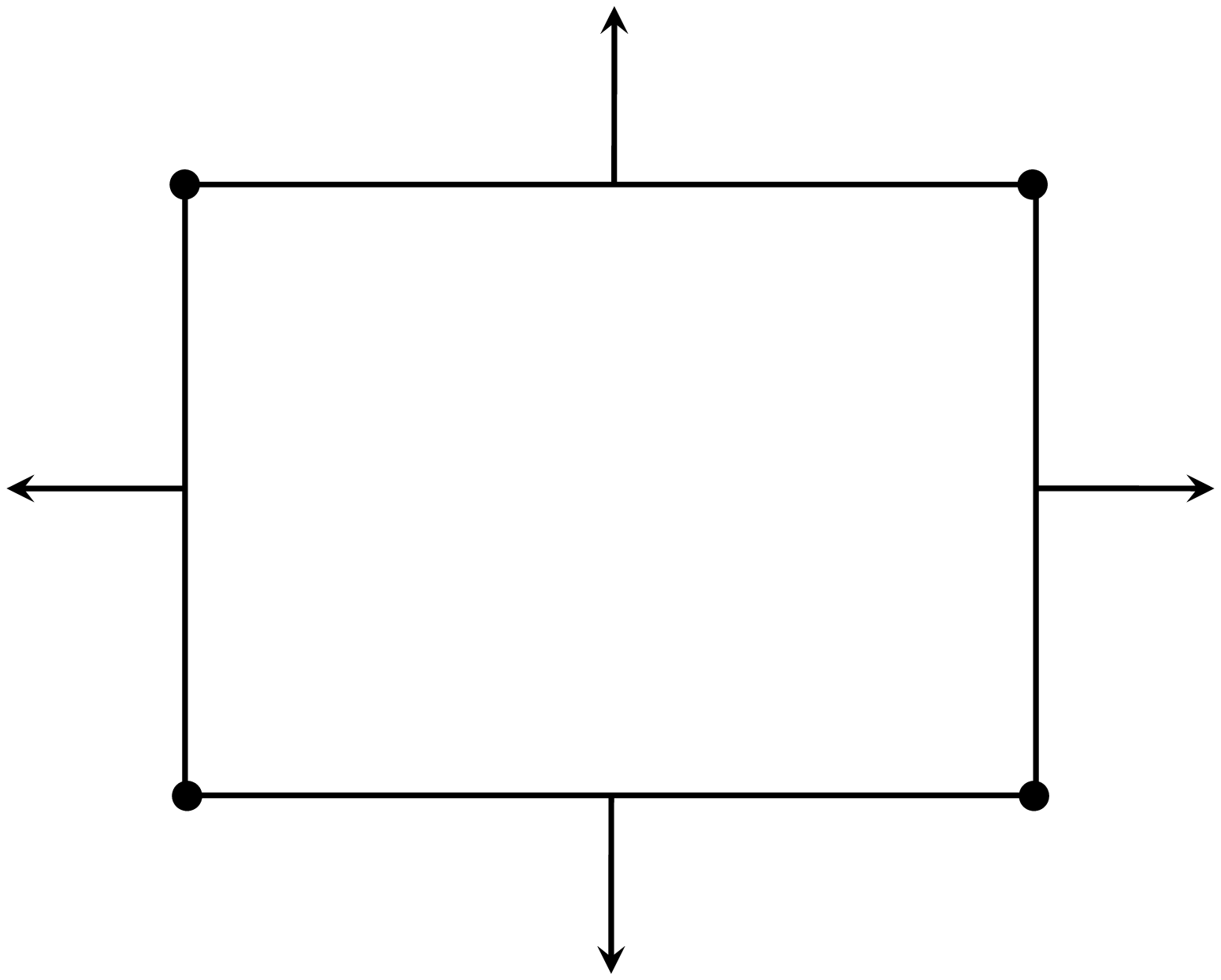}
\put(48,39){$K$} \put(8,10){$V_{1,K}$} \put(83,10){$V_{2,K}$} \put(83,67){$V_{3,K}$} \put(8,68){$V_{4,K}$}
\put(40,15){$\int$} \put(57,63){$\int$} \put(84,27){$\int$} \put(12,50){$\int$}
\end{overpic}
}
\subfigure[DoFs of the element $(K,\bm{V}_K,\Sigma_K)$] {
\label{fig: element 2}
\begin{overpic}[scale=0.4]{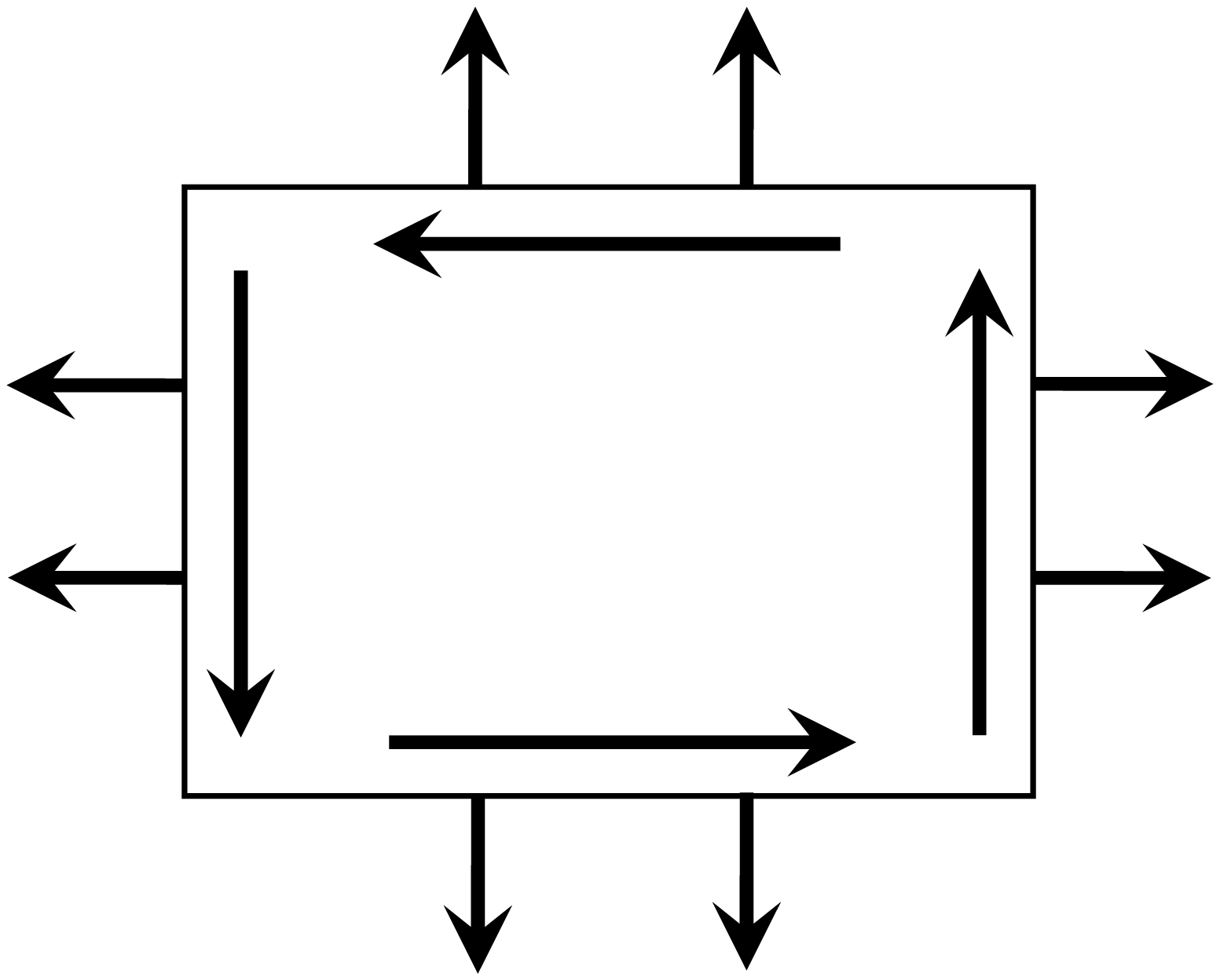}
\put(48,39){$K$}
\end{overpic}
}
\caption{DoFs of the two elements.\label{fig: elements}}
\end{figure}

\begin{theorem}
These two elements are well-defined.
\end{theorem}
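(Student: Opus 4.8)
The plan is to establish unisolvence for each element separately by the standard dimension-counting argument: in both cases the shape function space has dimension $12$ (for $W_K$, the space $P_3(K)$ is $10$-dimensional and we add two monomials; for $\bm{V}_K$, $[P_1(K)]^2$ is $6$-dimensional and we add six $\curl$-images), and there are exactly $12$ degrees of freedom, so it suffices to show that a shape function annihilated by all DoFs must vanish. First I would treat $(K,W_K,T_K)$. Given $w\in W_K$ with $\tau_{i,K}(w)=0$ for all $i$, I would look at the restriction of $w$ to each edge $E_{i,K}$: since the edges are axis-parallel, $w|_{E_{i,K}}$ is a univariate polynomial of degree at most $4$ (the quartic monomials $x^4,y^4$ contribute a genuine degree-$4$ term on the horizontal resp. vertical edges), but the vanishing of the two endpoint values and the edge integral gives only three conditions, which is not enough to kill a quartic. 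The resolution is that the \emph{quartic part} of $w|_{E_{i,K}}$ on a horizontal edge is a fixed multiple of $x^4$ with a coefficient common to $E_{1,K}$ and $E_{3,K}$, and similarly for vertical edges; combining the three nodal/integral conditions on such an edge with the structural constraint should force $w|_{E_{i,K}}\equiv 0$. Doing this on all four edges shows $w$ vanishes on $\partial K$, hence $w = b_K\,q$ where $b_K=(x-x'_K)(x''_K-x)(y-y'_K)(y''_K-y)$ is the biquartic bubble — but $b_K\notin W_K$ unless $q=0$, because $b_K$ contains the monomial $x^2y^2$ which is absent from $W_K=P_3(K)\oplus\mathrm{span}\{x^4,y^4\}$; a short comparison of monomials gives $w\equiv 0$. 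It then remains to check the normal-derivative DoFs $\tau_{i+8,K}$ are consistent with this count, which they are since they were not used above and the $12\times 12$ system is square.

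For $(K,\bm{V}_K,\Sigma_K)$ I would argue analogously. Suppose $\bm v\in\bm V_K$ is annihilated by all $\sigma_{j,K}$. Writing $\bm v=\bm p+\curl\,\phi$ with $\bm p\in[P_1(K)]^2$ and $\phi\in\mathrm{span}\{x^3,x^2y,xy^2,y^3,x^4,y^4\}$, one notes $\mathrm{div}\,\bm v=\mathrm{div}\,\bm p\in P_0(K)$; testing $\mathrm{div}\,\bm v$ against $1$ via the divergence theorem and the vanishing of the $\sigma_{i,K}$ (the plain normal-flux DoFs) shows $\mathrm{div}\,\bm v=0$, and on a rectangle a divergence-free field in $\bm V_K$ is exactly a $\curl$, i.e. $\bm v=\curl\,\psi$ for some potential $\psi\in W_K$ (here the shape spaces match: $\curl\,P_3$ spans the $\curl$ of the non-constant part of $P_3$, and the six listed generators are precisely $\curl$ applied to a basis of $W_K$ modulo constants). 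Then $\bm v\cdot\bm n=\partial\psi/\partial\bm t$ and $\bm v\cdot\bm t=-\partial\psi/\partial\bm n$ on each edge, so the DoFs $\sigma_{i,K},\sigma_{i+4,K},\sigma_{i+8,K}$ translate into: the tangential derivative of $\psi$ has zero integral and zero first moment (against $\xi_{i,K}$) on each edge, and the normal derivative of $\psi$ has zero integral on each edge. The first family, read around $\partial K$, forces $\psi$ to be a constant on $\partial K$ (the zeroth moment gives equal endpoint values, the first moment against the linear weight $\xi_{i,K}$ upgrades this to $\psi|_{E_{i,K}}$ constant), so $\psi-\mathrm{const}$ is again a bubble-type function in $W_K$ vanishing on $\partial K$, whence $\psi$ is constant by the argument from the first element and $\bm v=\curl\,\psi=0$.

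The key lemma powering both halves is the monomial obstruction: $W_K$ omits $x^2y^2$, $x^3y$, $xy^3$ (and all higher mixed terms), so the only element of $W_K$ vanishing on the whole boundary $\partial K$ is $0$. I would state and prove this small fact first — expand a general $w\in W_K$ vanishing on the four edges, substitute $x=x'_K$, $x=x''_K$, $y=y'_K$, $y=y''_K$ in turn to peel off factors, and observe that the forced factor $b_K$ has no representation in $W_K$ — and then invoke it in both unisolvence proofs. The main obstacle, and the only genuinely non-routine point, is precisely the edge analysis when the quartic monomials are present: naive counting on a single edge gives one condition too few, and one must exploit that $x^4$ (resp. $y^4$) is the \emph{only} source of the quartic edge-term and that its coefficient is shared between the two parallel edges, so that the global count over $\partial K$ closes even though no single edge does. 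Once that is handled, everything else is linear algebra and the divergence theorem.
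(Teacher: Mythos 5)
Your reduction of the second element to the first via the curl potential is the right idea and is essentially the technique the paper cites (Theorem 2.5 of \cite{Zhou2018}): for a divergence-free $\bm v=\curl\,\psi$ with $\psi\in W_K$, the twelve DoFs of $\Sigma_K$ translate, after subtracting a constant from $\psi$, into exactly the twelve DoFs of $T_K$ vanishing, so unisolvence of $(K,\bm V_K,\Sigma_K)$ follows from that of $(K,W_K,T_K)$. But your proof of the first element's unisolvence has a genuine gap, and the same gap reappears in your treatment of $\psi$. You claim that the four vertex values and four edge integrals alone, together with the ``shared quartic coefficient'' observation, force $w|_{\partial K}=0$. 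This cannot be true: those are only $8$ linear functionals on the $12$-dimensional space $W_K$, so their common kernel has dimension at least $4$; and concretely, on the reference square $[-1,1]^2$ the function $w=x^3-x\in P_3(K)\subset W_K$ vanishes at all four vertices and has zero integral on all four edges, yet $w$ does not vanish on the horizontal edges. (It is detected only by $\int_E \partial w/\partial\bm n\,\mathrm ds=4$ on the vertical edges.) So the normal-derivative DoFs $\tau_{i+8,K}$ must enter the kernel argument in an essential way; your closing remark that they ``were not used above and the system is square'' is logically backwards --- if the other eight DoFs already annihilated $W_K$, the full set of twelve could not be linearly independent, i.e.\ the element would \emph{fail} to be unisolvent.

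The same over-claim occurs in the second half: from the zero integral and zero first moment of $\partial\psi/\partial\bm t$ on each edge you assert $\psi|_{E_{i,K}}$ is constant, but the edge restriction of $\psi$ has degree up to four and these are only two conditions beyond the endpoint values; you then never use the conditions $\int_E\partial\psi/\partial\bm n\,\mathrm ds=0$ coming from $\sigma_{i+8,K}$. The correct statement is weaker and sufficient: the zeroth moments give $\psi$ a common value $c$ at all vertices, the first moments give $\int_{E}\psi\,\mathrm ds=c\,|E|$, and the tangential DoFs of $\bm v$ give $\int_E\partial(\psi-c)/\partial\bm n\,\mathrm ds=0$; hence $\psi-c$ is annihilated by all of $T_K$ and one invokes the first element's unisolvence --- there is no need (and no way) to show $\psi$ is constant on $\partial K$ directly. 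Your auxiliary observations (the dimension counts, the fact that $b_K\notin W_K$ because $x^2y^2$ is absent, and that the divergence-free part of $\bm V_K$ is $\curl\,W_K$) are all correct, but the central step --- showing that a function in $W_K$ with all \emph{twelve} DoFs zero vanishes --- is missing; you need an argument that genuinely couples the boundary-value DoFs with the normal-derivative DoFs, as in \cite{Chen1996}.
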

\begin{proof}
The unisolvency of $W_K$ with respect to $T_K$ can be found in \cite{Chen1996}.
The assertion for the second element can be directly obtained by the technique provided in Theorem 2.5 in \cite{Zhou2018}
using the first element.
\end{proof}

For a given partition $\mathcal{T}_h$,
the sets of all vertices, interior vertices, boundary vertices,
edges, interior edges and boundary edges are correspondingly denoted by $\mathcal{V}_h$,
$\mathcal{V}_h^i$, $\mathcal{V}_h^b$, $\mathcal{E}_h$, $\mathcal{E}_h^i$ and $\mathcal{E}_h^b$.
For each $E\in\mathcal{E}_h$, $\bm{n}_E$ is a fixed unit vector perpendicular to $E$
and $\bm{t}_E$ is a vector obtained by rotating $\bm{n}_E$ by ninety degree counterclockwisely.
Moreover, for $E\in\mathcal{E}_h^i$, the jump of a function $v$ across $E$ is defined as $[v]_E=v|_{K_1}-v|_{K_2}$,
where $K_1$ and $K_2$ are the cells sharing $E$ as a common edge, and $\bm{n}_E$ points from $K_1$ to $K_2$.
For $E\in\mathcal{E}_h^b$, we set $[v]_E=v|_{K}$ if $E$ is an edge of $K$.
Under these notations, we introduce the following finite element spaces over $\Omega$:
\[
\begin{aligned}
W_h=\Bigg\{w&\in L^2(\Omega):~w|_K\in W_K,~\forall K\in\mathcal{T}_h,
~\mbox{$w$ is continuous at all $V\in\mathcal{V}_h^i$ and}\\
&\mbox{vanishes at all $V\in\mathcal{V}_h^b$},
~\int_E\left[w\right]_E\,\mathrm{d}s
=\int_E\left[\frac{\partial w}{\partial\bm{n}_E}\right]_E\,\mathrm{d}s=0
~\mbox{for all $E\in\mathcal{E}_h$}\Bigg\};\\
\bm{V}_h=\Bigg\{\bm{v}&\in [L^2(\Omega)]^2:~\bm{v}|_K\in\bm{V}_K,~\forall K\in\mathcal{T}_h,\\
&\int_E[\bm{v}\cdot\bm{n}_E]_E\xi\,\mathrm{d}s=0,~\forall\xi\in P_1(E),~\int_E[\bm{v}\cdot\bm{t}_E]_E\,\mathrm{d}s=0,~\forall E\in\mathcal{E}_h\Bigg\};\\
P_h=\Big\{q&\in L_0^2(\Omega):~q|_K\in P_0(K),~\forall K\in\mathcal{T}_h\Big\}.
\end{aligned}
\]

For each $K\in\mathcal{T}_h$,
we define interpolation operators $\mathcal{I}_K:H^2(K)\rightarrow W_K$ and $\bm{\Pi}_K:[H^1(K)]^2\rightarrow \bm{V}_K$,
such that $\tau_{i,K}(\mathcal{I}_Kw)=\tau_{i,K}(w)$, $\sigma_{i,K}(\bm{\Pi}_K\bm{v})=\sigma_{i,K}(\bm{v})$, $i=1,2,\ldots,12$.
The global interpolation operators $\mathcal{I}_h:H_0^2(\Omega)\rightarrow W_h$ and $\bm{\Pi}_h:[H_0^1(\Omega)]^2\rightarrow \bm{V}_h$ are naturally set as $\mathcal{I}_h|_K=\mathcal{I}_K$ and $\bm{\Pi}_h|_K=\bm{\Pi}_K$, $\forall K\in \mathcal{T}_h$.
Similarly, we write $\mathcal{P}_h$ as the $L^2$-projection operator from $L^2(\Omega)$ to $P_h$.
Moreover, the differential operators $\curl$ and $\mathrm{div}$ have their discrete versions $\curl_h$ and $\mathrm{div}_h$,
respectively, determined by $\curl_h|_K=\curl$ on $K$, and $\mathrm{div}_h|_K=\mathrm{div}$ on $K$, $\forall K\in \mathcal{T}_h$. The following commutative diagram provides a discrete Stokes complex.

\begin{theorem}
\label{th: discrete complex}
It holds that
\begin{equation}
\label{e: discrete complex}
\begin{tikzcd}[column sep=large, row sep=large]
0 \arrow{r} & H_0^2(\Omega) \arrow{r}{\curl} \arrow{d}{\mathcal{I}_h}
& \left[H_0^1(\Omega)\right]^2 \arrow{r}{\mathrm{div}} \arrow{d}{\bm{\Pi}_{h}} &
L_0^2(\Omega)\arrow{r} \arrow{d}{\mathcal{P}_h} &0\\
0 \arrow{r} & W_h \arrow{r}{\curl_h}
& \bm{V}_h \arrow{r}{\mathrm{div}_h}& P_h \arrow{r}&0.
\end{tikzcd}
\end{equation}
Moreover, the sequence in the second row is exact.
\end{theorem}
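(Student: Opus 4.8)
\emph{Strategy.} I would prove the statement in three parts: commutativity of the two squares (and well-definedness of the bottom horizontal maps), exactness of the bottom row at $W_h$ and at $P_h$, and finally exactness at $\bm V_h$.

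\emph{Commutativity and well-definedness.} Everything here is local. On a cell $K$ and an edge $E\subset\partial K$ one has the pointwise identities $\curl u\cdot\bm n_E=\partial_{\bm t_E}u$ and $\curl u\cdot\bm t_E=-\partial_{\bm n_E}u$ (signs according to the chosen curl convention). Applying these together with one integration by parts along each $E_{i,K}$ (using that $\partial_{\bm t}\xi_{i,K}$ is constant there), the functionals $\sigma_{i,K}(\curl u)$, $\sigma_{i+4,K}(\curl u)$, $\sigma_{i+8,K}(\curl u)$ reduce to linear combinations of the vertex values $u(V_{j,K})$, the edge integral $\int_{E_{i,K}}u\,\mathrm ds$ and the integral $\int_{E_{i,K}}\partial_{\bm n}u\,\mathrm ds$ — exactly the data preserved by $\mathcal I_K$. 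Hence $\curl\,\mathcal I_Kw=\bm\Pi_K\curl w$ (both sides already lie in $\bm V_K$), and the same computation with $u=w\in W_h$, invoking $\int_E[w]_E\,\mathrm ds=0$, $[w]_E=0$ at the endpoints of $E$, and $\int_E[\partial_{\bm n_E}w]_E\,\mathrm ds=0$, shows that $\curl_hw$ satisfies the interface conditions of $\bm V_h$, i.e.\ $\curl_h(W_h)\subseteq\bm V_h$. For the right square, $\mathrm{div}\,\bm\Pi_K\bm v$ and $\mathcal P_K(\mathrm{div}\,\bm v)$ both lie in $P_0(K)$ and have the same mean over $K$, since $\int_K\mathrm{div}\,\bm\Pi_K\bm v=\sum_i\sigma_{i,K}(\bm\Pi_K\bm v)=\sum_i\sigma_{i,K}(\bm v)=\int_K\mathrm{div}\,\bm v$ by the divergence theorem; moreover $\mathrm{div}\,\bm V_K=P_0(K)$ and $\int_\Omega\mathrm{div}_h\bm v=\sum_K\int_{\partial K}\bm v\cdot\bm n=0$ by the normal-jump conditions of $\bm V_h$ (including on boundary edges), so $\mathrm{div}_h(\bm V_h)\subseteq P_h$. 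Summing over $K$ gives $\curl_h\mathcal I_h=\bm\Pi_h\curl$ and $\mathrm{div}_h\bm\Pi_h=\mathcal P_h\,\mathrm{div}$.

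\emph{Exactness at the ends.} If $\curl_hw=0$, then $w|_K$ is constant on every $K$; continuity at interior vertices propagates this constant through the connected mesh, and the boundary-vertex conditions then force $w\equiv0$, so $\curl_h$ is injective. For surjectivity of $\mathrm{div}_h$, take $q\in P_h\subset L^2_0(\Omega)$; surjectivity of the continuous operator $\mathrm{div}:[H^1_0(\Omega)]^2\to L^2_0(\Omega)$ gives $\bm v$ with $\mathrm{div}\,\bm v=q$, and then $\bm\Pi_h\bm v\in\bm V_h$ with $\mathrm{div}_h\bm\Pi_h\bm v=\mathcal P_hq=q$ by the commutativity just established.

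\emph{Exactness at $\bm V_h$, and the main difficulty.} Since $\mathrm{div}\,\curl=0$ cellwise, $\mathrm{range}(\curl_h)\subseteq\ker(\mathrm{div}_h)$. Locally this is an equality: $\dim\curl W_K=12-1=11$ (curl kills only the constants), while $\dim\ker(\mathrm{div}|_{\bm V_K})=12-\dim\mathrm{div}\,\bm V_K=12-1=11$ because $\mathrm{div}\,\bm V_K=\mathrm{div}[P_1(K)]^2=P_0(K)$. Globally, counting the genuine degrees of freedom of the spaces gives $\dim W_h=\#\mathcal V_h^i+2\,\#\mathcal E_h^i$, $\dim\bm V_h=3\,\#\mathcal E_h^i$, $\dim P_h=\#\mathcal T_h-1$; feeding in the Euler relation $\#\mathcal V_h-\#\mathcal E_h+\#\mathcal T_h=1$ for the (simply connected) polygon together with $\#\mathcal V_h^b=\#\mathcal E_h^b$ yields $\dim\bm V_h=\dim W_h+\dim P_h$. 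Combined with injectivity of $\curl_h$ and surjectivity of $\mathrm{div}_h$ this forces $\dim\mathrm{range}(\curl_h)=\dim W_h=\dim\bm V_h-\dim P_h=\dim\ker(\mathrm{div}_h)$, hence $\mathrm{range}(\curl_h)=\ker(\mathrm{div}_h)$, completing the proof. The step I expect to be the real obstacle is precisely this bookkeeping — pinning down the global DoFs of $W_h$ and $\bm V_h$ (in particular that boundary vertices and boundary edges contribute none) and invoking Euler's formula with the correct boundary relation — since this is exactly where the topology of $\Omega$ enters and where exactness could otherwise fail. An alternative that avoids the count: for $\bm v\in\bm V_h$ with $\mathrm{div}_h\bm v=0$, local exactness gives $w_K\in W_K$ with $\curl w_K=\bm v|_K$, unique up to an additive constant; the normal-jump conditions of $\bm V_h$ turn the vertex-value discrepancies across edges into a closed $1$-cochain on the cell adjacency graph, which by simple connectedness is a coboundary, so the constants can be chosen to glue the $w_K$ into a single $w\in W_h$, the remaining edge-moment conditions following from the $P_1$-weighted normal-jump conditions and the special form of the weights $\xi_{i,K}$.
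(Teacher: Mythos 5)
Your proposal is correct, and it supplies in full the standard argument (commutativity via the local identities $\curl u\cdot\bm{n}=\partial u/\partial\bm{t}$, $\curl u\cdot\bm{t}=-\partial u/\partial\bm{n}$ reducing the $\Sigma_K$-functionals of $\curl u$ to the $T_K$-data, plus exactness via injectivity of $\curl_h$, surjectivity of $\mathrm{div}_h$ through the Fortin property, and the Euler-formula dimension count) that the paper itself does not write out but delegates entirely to Theorem 4.1 of \cite{Zhou2018}. Your identification of the dimension bookkeeping and the implicit simple-connectedness of $\Omega$ as the point where exactness could fail is exactly right; no gaps.
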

\begin{proof}
Please see Theorem 4.1 in \cite{Zhou2018} for quite a similar argument.
\end{proof}

Define the semi-norms
\[
|v|_{m,h}=\left(\sum_{K\in\mathcal{T}_h}|v|_{m,K}^2\right)^{1/2},~m=0,1,2,3,
\]
then $|\cdot|_{2,h}$ and $|\cdot|_{1,h}$ are norms over $W_h$ and $\bm{V}_h$, respectively.
Since $\mathcal{I}_K$ preserves $P_3(K)$ for each $K\in\mathcal{T}_h$, the Bramble-Hilbert lemma gives
\begin{equation}
\label{e: interp err scalar}
|w-\mathcal{I}_hw|_{j,h}\leq Ch^{k-j}|w|_k,~k=3,4,~j=0,1,2,3,~\forall w\in H_0^2(\Omega)\cap H^4(\Omega).
\end{equation}
Moreover, if $\bm{v}\in [H_0^1(\Omega)\cap H^3(\Omega)]^2$ and $\mathrm{div}\,\bm{v}=0$,
we can find $\bm{v}=\curl\,\phi$ for some $\phi\in H_0^2(\Omega)\cap H^4(\Omega)$.
The commutative diagram (\ref{e: discrete complex}) implies
\begin{equation}
\label{e: interp err vector}
\left|\bm{v}-\bm{\Pi}_h\bm{v}\right|_{1,h}=\left|\curl\,\phi-\curl_h\mathcal{I}_h\phi\right|_{1,h}
\leq C|\phi-\mathcal{I}_h\phi|_{2,h}\leq Ch^2|\phi|_4.
\end{equation}

For a cell $K\in\mathcal{T}_h$,
we write $\mathcal{P}_k^K$  as the $L^2$-projection operator from $L^2(K)$ to $P_k(K)$.
If $E$ is an edge of $K$,
we also define $\mathcal{P}_k^E$ that
$L^2$-projects the traces of the functions in $H^1(K)$ on $E$ to $P_k(E)$.

\section{High accuracy analysis of $W_h$ for biharmonic problems}
\label{s: scalar}

Let us turn to the application of $W_h$ to the biharmonic problem.
Given $f\in H^{-2}(\Omega)$, the biharmonic problem is to find $u$ such that
\begin{equation}
\label{e: model problem scalar}
\begin{aligned}
&\Delta^2u=f~~~&\mbox{in}~\Omega,\\
&u=\frac{\partial u}{\partial\bm{n}}=0~&\mbox{on}~\partial\Omega.
\end{aligned}
\end{equation}
A weak form can be represented as: Find $u\in H_0^2(\Omega)$ such that
\begin{equation}
\label{e: weak scalar}
(\nabla^2 u,\nabla^2 v)=(f,v),~\forall v\in H_0^2(\Omega),
\end{equation}
where $\nabla^2 v$ is the Hessian matrix of $v$.
Note that we have fixed the Poisson ratio $\sigma=0$.
For other $\sigma\in(0,1/2]$, the coming analysis has no intrinsic difference.
The discrete form of (\ref{e: weak scalar}) reads:
Find $u_h\in W_h$ such that
\begin{equation}
\label{e: discrete weak scalar}
\sum_{K\in\mathcal{T}_h}(\nabla^2 u_h,\nabla^2 v_h)_K=(f,v_h),~\forall v_h\in W_h.
\end{equation}
This problem has a unique solution due to the Lax-Milgram theorem.
Note that
\begin{equation}
\label{e: normal weak scalar}
\int_E\left[\frac{\partial w_h}{\partial \bm{n}_E}\right]_E\,\mathrm{d}s=0,~\forall E\in\mathcal{E}_h,~\forall w_h\in W_h.
\end{equation}
Hence, based on a classical consistency error analysis,
one can only predict an $O(h)$ convergence order in discrete $H^2$-norm.
Nevertheless, the result below hints a higher accuracy.

\begin{theorem}
\label{th: converge scalar}
Let $u\in H_0^2(\Omega)\cap H^4(\Omega)$ and $u_h\in W_h$ be the solutions of (\ref{e: weak scalar})
and (\ref{e: discrete weak scalar}), respectively.
If $\mathcal{T}_h$ is uniform, then
\begin{equation}
\label{e: err scalar}
|u-u_h|_{2,h}\leq Ch^2|u|_4.
\end{equation}
\end{theorem}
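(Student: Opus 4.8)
The plan is to follow the standard second Strang-type decomposition for nonconforming methods, splitting the error into an approximation part and a consistency part, and then to extract the extra power of $h$ from the consistency term by exploiting the interior symmetry of each rectangle (the same mechanism that yields superconvergence for the Adini element). Write $e_h = \mathcal{I}_h u - u_h \in W_h$. By the coercivity of the discrete bilinear form $a_h(\cdot,\cdot) = \sum_K (\nabla^2\cdot,\nabla^2\cdot)_K$ on $W_h$ in the $|\cdot|_{2,h}$ norm, we have $|e_h|_{2,h}^2 \leq C\, a_h(e_h,e_h) = C\,a_h(\mathcal{I}_h u - u, e_h) + C\big(a_h(u,e_h) - (f,e_h)\big)$. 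The first term is controlled by $|\mathcal{I}_h u - u|_{2,h}\,|e_h|_{2,h} \leq Ch^2|u|_4\,|e_h|_{2,h}$ via the interpolation estimate \eqref{e: interp err scalar} with $k=4$, $j=2$. Combined with $|u - \mathcal{I}_h u|_{2,h} \leq Ch^2|u|_4$ again and the triangle inequality, everything reduces to bounding the consistency functional
\[
E_h(u, e_h) := a_h(u, e_h) - (f, e_h) = \sum_{K\in\mathcal{T}_h}\int_K \big(\nabla^2 u : \nabla^2 e_h - \Delta^2 u\, e_h\big)\,\mathrm{d}x,
\]
and the goal is to show $|E_h(u,e_h)| \leq Ch^2|u|_4\,|e_h|_{2,h}$ — one order better than the generic $O(h)$ bound.

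The next step is to integrate by parts twice on each cell $K$ and collect the boundary contributions over edges. After cancellation of the shared single-valued parts (which vanish because $e_h$ has zero-mean jumps of both itself and its normal derivative across each edge, cf. \eqref{e: normal weak scalar} and the definition of $W_h$), the consistency error becomes a sum over edges of terms of the schematic form $\sum_{E} \int_E \big(\partial_{nn} u - \text{(edge-mean)}\big)\big[\partial_n e_h\big]_E\,\mathrm{d}s + \sum_E \int_E \big(\text{third derivatives of }u\big)\big[e_h\big]_E\,\mathrm{d}s + \text{tangential terms}$. For the first family I would insert $\mathcal{P}_0^E$ on the jump $[\partial_n e_h]_E$ for free (its edge integral is zero) and also subtract the $E$-average from the trace of $\partial_{nn}u$, turning each edge integral into a product of two oscillation quantities; a trace/Bramble–Hilbert argument on $\partial_{nn}u$ gives an $O(h^{3/2})$ factor, while the jump term, summed and estimated by a discrete trace inequality, contributes the remaining fractional powers and one factor of $|e_h|_{2,h}$, producing an $O(h^2)$ bound. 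The genuinely delicate family is the one paired with the single-valued part of $\partial_n e_h$ (or with $e_h$ itself), where no jump helps; here one must use that the \emph{full} sum over all edges of a fixed mesh line telescopes, and invoke the special structure $W_K = P_3(K)\oplus\mathrm{span}\{x^4,y^4\}$ together with the uniformity of $\mathcal{T}_h$: on a reference square the anomalous monomials $x^4,y^4$ are even, so their contribution to the troublesome edge functional against $e_h$ is canceled between the two edges of $K$ normal to that direction once one passes to the reference element and uses that opposite edges are mapped symmetrically. This is exactly the ``interior symmetry of a rectangle'' argument of Lascaux–Lenoir and Luo–Lin, adapted to the $12$-DoF shape space.

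The main obstacle, and where the real work lies, is this last step: showing that the non-polynomial-reproducing directions of $\mathcal{I}_h$ (the $P_3$-preserving interpolant leaves an $O(h^2)$, not $O(h^3)$, truncation, so a naive estimate of $E_h(\mathcal{I}_h u,\cdot)$ is not enough) do not spoil the $O(h^2)$ consistency bound. Concretely I expect to need a Bramble–Hilbert-type lemma on the reference square asserting that the bilinear edge functional $(u, w)\mapsto \sum_{E\subset\partial K}\int_E R_E(u)\,w\,\mathrm{d}s$, where $R_E(u)$ is the relevant residual of $u$'s derivatives on edge $E$, annihilates all $u\in P_3$ and, thanks to the symmetry of $W_K$ under the reflections of the square, also annihilates enough of the interaction with $w\in W_K$ that only a cubic-order remainder in $u$ survives; scaling then yields the $h^2$ gain. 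Verifying this requires an explicit—if routine—computation on the reference element, checking the parity of each shape-function basis element against each edge residual. Once that lemma is in hand, summing over $K\in\mathcal{T}_h$ with Cauchy–Schwarz and the interpolation bound \eqref{e: interp err scalar} closes the estimate and gives \eqref{e: err scalar}.
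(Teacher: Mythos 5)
Your skeleton --- Strang lemma, reduction to the consistency functional, and the recognition that the extra order of $h$ must be extracted from the normal--normal edge term by exploiting the structure of $W_K$ together with the uniformity of $\mathcal{T}_h$ --- is indeed the paper's strategy. The gap lies in the mechanism you propose for that extraction, in two places. First, your claim that the family $\sum_E\int_E\bigl(\frac{\partial^2u}{\partial\bm{n}^2}-\mathcal{P}^E\frac{\partial^2u}{\partial\bm{n}^2}\bigr)\bigl[\frac{\partial e_h}{\partial\bm{n}}\bigr]_E\,\mathrm{d}s$ is already $O(h^2)$ by an oscillation-times-oscillation argument is off by one order: the jump of $\partial e_h/\partial\bm{n}$ is orthogonal only to constants, cf.~\eqref{e: normal weak scalar}, so you may insert at most $\mathcal{P}_0^E$ in each factor, and the product of two $O(h^{1/2})$ oscillations gives only $Ch|u|_3|e_h|_{2,h}$. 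This family \emph{is} the delicate term ($I_3$ in \eqref{e: E2h}), not one you get for free. Second, the reference-element lemma you propose --- that the edge functional annihilates all $u\in P_3$ thanks to the even parity of $x^4,y^4$ --- is false: already for $\frac{\partial^2u}{\partial y^2}=x$ the top-minus-bottom contribution $I_{1,K}(u,w_h)$ of \eqref{e: I3} does not vanish, see \eqref{e: derivation x scalar}--\eqref{e: equal scalar x}.

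What actually works, and what your sketch is missing, is an edge-to-volume comparison followed by a global telescoping. One introduces the volume bilinear form $I'_{1,K}(u,w_h)=\frac{h_x^2}{12}\int_K\bigl(\frac{\partial^3u}{\partial x\partial y^2}\frac{\partial^3w_h}{\partial x\partial y^2}+\frac{\partial^4u}{\partial x^2\partial y^2}\frac{\partial^2w_h}{\partial y^2}\bigr)\,\mathrm{d}x\mathrm{d}y$ and shows $I_{1,K}=I'_{1,K}$ whenever $\frac{\partial^2u}{\partial y^2}\in P_1(K)$; this identity rests on the Simpson-rule computation \eqref{e: normal relation scalar}, which uses the specific fact that $\partial w_h/\partial y$ restricted to a horizontal edge lies in $P_2(E)$ for $w_h\in P_3\oplus\mathrm{span}\{x^4,y^4\}$. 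Bramble--Hilbert applied in $u$ to the difference then gives $O(h^2)$ per cell, but the remaining $\sum_K I'_{1,K}$ is individually only $O(h)$; it is summed by parts back to the vertical edges, where it telescopes across neighbouring cells thanks to the cell-independent factor $h_x^2/12$ and the identity \eqref{e: key 2}. Only at this stage does the uniformity of $\mathcal{T}_h$ enter --- which also shows why a purely intra-cell parity argument cannot suffice: each rectangle is symmetric about its own centre whether or not the mesh is uniform, yet uniformity is genuinely needed here. Without the comparison functional $I'_{1,K}$ and the inter-cell cancellation, your argument does not close.
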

\begin{proof}
Let us begin with the Strang lemma
\begin{equation}
\label{e: Strang}
|u-u_h|_{2,h}\leq C\left(\inf_{v_h\in W_h}|u-v_h|_{2,h}
+\sup_{w_h\in W_h}\frac{|E_{2,h}(u,w_h)|}{|w_h|_{2,h}}\right)
\end{equation}
with
\[
E_{2,h}(u,w_h)=\sum_{K\in\mathcal{T}_h}(\nabla^2 u,\nabla^2 w_h)_K-(f,w_h).
\]
It follows from (\ref{e: interp err scalar}) that
\[
\inf_{v_h\in W_h}|u-v_h|_{2,h}\leq|u-\mathcal{I}_hu|_{2,h}\leq Ch^2|u|_4,
\]
therefore it suffices to consider the consistency error $E_{2,h}(u,w_h)$.

If $u\in H^4(\Omega)$ then $f\in L^2(\Omega)$.
Let $\mathcal{I}^{S_2}_K$ be the nodal interpolation operator from $W_K$ to $S_2(K)$,
the second order serendipity element space $P_2(K)\oplus\mathrm{span}\{x^2y,xy^2\}$,
satisfying
\[
\mathcal{I}^{S_2}_K(w_h|_K)(V_{i,K})=(w_h|_K)(V_{i,K}),
~\int_{E_{i,K}}\mathcal{I}^{S_2}_K(w_h|_K)\,\mathrm{d}s=\int_{E_{i,K}}w_h|_K\,\mathrm{d}s,~i=1,2,3,4,
\]
and then set $\mathcal{I}^{S_2}_h$ via $\mathcal{I}^{S_2}_h|_K=\mathcal{I}^{S_2}_K$, $\forall K\in\mathcal{T}_h$.
Owing to the weak continuity of $w_h$, we find $\mathcal{I}^{S_2}_hw_h\in H_0^1(\Omega)$.
An integration by parts using (\ref{e: model problem scalar}) gives
\begin{equation}
\label{e: E2h}
\begin{aligned}
E_{2,h}(u,w_h)=&\sum_{K\in\mathcal{T}_h}(\nabla^2 u,\nabla^2 w_h)_K-(f,\mathcal{I}^{S_2}_hw_h)-(f,w_h-\mathcal{I}^{S_2}_hw_h)\\
=&-\sum_{K\in\mathcal{T}_h}(\nabla\Delta u,\nabla(w_h-\mathcal{I}^{S_2}_hw_h))_K-(f,w_h-\mathcal{I}^{S_2}_hw_h)\\
&+\sum_{K\in\mathcal{T}_h}\int_{\partial K}\frac{\partial^2u}{\partial\bm{n}^2}\frac{\partial w_h}{\partial\bm{n}}\,\mathrm{d}s+\sum_{K\in\mathcal{T}_h}\int_{\partial K}\frac{\partial^2u}{\partial\bm{n}\partial\bm{t}}\frac{\partial w_h}{\partial\bm{t}}\,\mathrm{d}s\\
:=&I_1(u,w_h)+I_2(u,w_h)+I_3(u,w_h)+I_4(u,w_h).
\end{aligned}
\end{equation}
As far as $I_1(u,w_h)$ is concerned, noting that
\[
\int_K\nabla(w_h-\mathcal{I}^{S_2}_hw_h)\,\mathrm{d}\bm{x}=\int_{\partial K}(w_h-\mathcal{I}^{S_2}_hw_h)\bm{n}\,\mathrm{d}s=\bm{0}
\]
and $\mathcal{I}^{S_2}_K$ preserves $P_2(K)$, we derive
\begin{equation}
\label{e: I1}
\begin{aligned}
|I_1(u,w_h)|&=\left|\sum_{K\in\mathcal{T}_h}\left(\nabla\Delta u-\mathcal{P}_0^K\nabla\Delta u,
\nabla(w_h-\mathcal{I}^{S_2}_hw_h)\right)_K\right|\\
&\left\{
\begin{aligned}
&\leq\sum_{K\in\mathcal{T}_h}Ch|\nabla\Delta u|_{1,K}Ch|w_h|_{2,K}\leq Ch^2|u|_4|w_h|_{2,h};\\
&\leq\sum_{K\in\mathcal{T}_h}Ch|\nabla\Delta u|_{1,K}Ch^2|w_h|_{3,K}\leq Ch^3|u|_4|w_h|_{3,h}.
\end{aligned}
\right.
\end{aligned}
\end{equation}
One can also estimate $|I_2(u,w_h)|$ as
\begin{equation}
\label{e: I2}
|I_2(u,w_h)|\leq \|f\|_0\|w_h-\Pi_hw_h\|_0
\left\{
\begin{aligned}
&\leq Ch^2\|f\|_0|w_h|_{2,h}\leq Ch^2|u|_4|w_h|_{2,h};\\
&\leq Ch^3\|f\|_0|w_h|_{3,h}\leq Ch^3|u|_4|w_h|_{3,h}.
\end{aligned}
\right.
\end{equation}
Moreover, since the vertex values and the edge integrals of $w_h$ are continuous,
integrating by parts results in
\begin{equation}
\label{e: tangential weak scalar}
\int_E\left[\frac{\partial w_h}{\partial \bm{t}_E}\right]_E\xi_E\,\mathrm{d}s=0,~\forall \xi_E\in P_1(E),
~\forall E\in\mathcal{E}_h,~\forall w_h\in W_h,
\end{equation}
which asserts that
\begin{equation}
\label{e: I4}
\begin{aligned}
|I_4(u,w_h)|&=\left|\sum_{K\in\mathcal{T}_h}\sum_{E\subset\partial K}\int_E
\left(\frac{\partial^2u}{\partial\bm{n}\partial\bm{t}}-\mathcal{P}_1^E\frac{\partial^2u}{\partial\bm{n}\partial\bm{t}}
\right)\left(\frac{\partial w_h}{\partial \bm{t}}-\mathcal{P}_1^E\frac{\partial w_h}{\partial \bm{t}}\right)\,\mathrm{d}\sigma\right|\\
&\leq\sum_{K\in\mathcal{T}_h}\sum_{E\subset\partial K}\left\|\frac{\partial^2u}{\partial\bm{n}\partial\bm{t}}
-\mathcal{P}_1^E\frac{\partial^2u}{\partial\bm{n}\partial\bm{t}}\right\|_{0,E}\left\|\frac{\partial w_h}{\partial \bm{t}}-\mathcal{P}_1^E\frac{\partial w_h}{\partial \bm{t}}\right\|_{0,E}\\
&\left\{
\begin{aligned}
&\leq\sum_{K\in\mathcal{T}_h}Ch^{3/2}|u|_{4,K}Ch^{1/2}|w_h|_{2,K}\leq Ch^2|u|_4|w_h|_{2,h};\\
&\leq\sum_{K\in\mathcal{T}_h}Ch^{3/2}|u|_{4,K}Ch^{3/2}|w_h|_{3,K}\leq Ch^3|u|_4|w_h|_{3,h}.
\end{aligned}
\right.
\end{aligned}
\end{equation}
Hence, it remains to estimate $|I_3(u,w_h)|$.

To this end, notice from the weak continuity of $w_h$ that
\begin{equation}
\label{e: I3}
\begin{aligned}
I_3(u,w_h)=&\sum_{K\in\mathcal{T}_h}\left(
\int_{E_{3,K}}\frac{\partial^2u}{\partial y^2}
\left(\frac{\partial w_h}{\partial y}-\mathcal{P}_0^{E_{3,K}}\frac{\partial w_h}{\partial y}\right)\,\mathrm{d}s
-\int_{E_{1,K}}\frac{\partial^2u}{\partial y^2}
\left(\frac{\partial w_h}{\partial y}-\mathcal{P}_0^{E_{1,K}}\frac{\partial w_h}{\partial y}\right)\,\mathrm{d}s\right)\\
&+\sum_{K\in\mathcal{T}_h}\left(
\int_{E_{2,K}}\frac{\partial^2u}{\partial x^2}
\left(\frac{\partial w_h}{\partial x}-\mathcal{P}_0^{E_{2,K}}\frac{\partial w_h}{\partial x}\right)\,\mathrm{d}s
-\int_{E_{4,K}}\frac{\partial^2u}{\partial x^2}
\left(\frac{\partial w_h}{\partial x}-\mathcal{P}_0^{E_{4,K}}\frac{\partial w_h}{\partial x}\right)\,\mathrm{d}s\right)\\
:=&\sum_{K\in\mathcal{T}_h}I_{1,K}(u,w_h)+\sum_{K\in\mathcal{T}_h}I_{2,K}(u,w_h).
\end{aligned}
\end{equation}
Let us consider the summation involving $I_{1,K}(u,w_h)$. Clearly,
\begin{equation}
\label{e: bilinear scalar est 1}
\begin{aligned}
|I_{1,K}(u,w_h)|&=\Bigg|
\int_{E_{3,K}}\left(\frac{\partial^2u}{\partial y^2}-\mathcal{P}_0^{E_{3,K}}\frac{\partial^2u}{\partial y^2}\right)
\left(\frac{\partial w_h}{\partial y}-\mathcal{P}_0^{E_{3,K}}\frac{\partial w_h}{\partial y}\right)\,\mathrm{d}s\\
&~~~~~-\int_{E_{1,K}}\left(\frac{\partial^2u}{\partial y^2}-\mathcal{P}_0^{E_{1,K}}\frac{\partial^2u}{\partial y^2}\right)
\left(\frac{\partial w_h}{\partial y}-\mathcal{P}_0^{E_{1,K}}\frac{\partial w_h}{\partial y}\right)\,\mathrm{d}s
\Bigg|\\
&\leq Ch\left|\frac{\partial^2u}{\partial y^2}\right|_{1,K}|w_h|_{2,K}.
\end{aligned}
\end{equation}
On the other hand, introduce the bilinear form $I'_{1,K}(u,w_h)$:
\begin{equation}
\label{e: I'}
I'_{1,K}(u,w_h)=\frac{h_x^2}{12}\int_K\frac{\partial^3u}{\partial x\partial y^2}\frac{\partial^3w_h}{\partial x\partial y^2}
+\frac{\partial^4u}{\partial x^2\partial y^2}\frac{\partial^2w_h}{\partial y^2}\,\mathrm{d}x\mathrm{d}y.
\end{equation}
It follows from the inverse inequality that
\begin{equation}
\label{e: bilinear scalar est 2}
\begin{aligned}
|I'_{1,K}(u,w_h)|&\leq Ch^2\left|\frac{\partial^2u}{\partial y^2}\right|_{1,K}|w_h|_{3,K}+Ch^2\left|\frac{\partial^2u}{\partial y^2}\right|_{2,K}|w_h|_{2,K}\\
&\leq Ch\left(\left|\frac{\partial^2u}{\partial y^2}\right|_{1,K}+h\left|\frac{\partial^2u}{\partial y^2}\right|_{2,K}\right)|w_h|_{2,K}.
\end{aligned}
\end{equation}
Moreover, we find
\begin{equation}
\label{e: equal scalar y}
I_{1,K}(u,w_h)=I'_{1,K}(u,w_h)=0,~\mbox{if}~\frac{\partial^2 u}{\partial y^2}=y.
\end{equation}
Next, an easy verification shows
\[
\frac{\partial w_h}{\partial y}\bigg|_{E_{i,K}}\in P_2(E_{i,K}),~i=1,3,
\]
therefore by the Simpson quadrature rule,
\begin{equation}
\label{e: normal relation scalar}
\begin{aligned}
\frac{1}{h_x}\int_{E_{i,K}}&\left(\frac{\partial w_h}{\partial y}-\mathcal{P}_0^{E_{i,K}}\frac{\partial w_h}{\partial y}\right)\xi_{i,K}\,\mathrm{d}s=\frac{1}{h_x}\int_{E_{i,K}}\frac{\partial w_h}{\partial y}\xi_{i,K}\,\mathrm{d}s\\
&=\frac{1}{6}\left(\frac{\partial w_h}{\partial y}(V_{i'',K})-\frac{\partial w_h}{\partial y}(V_{i',K})\right)
=\frac{1}{6}\int_{E_{i,K}}\frac{\partial^2 w_h}{\partial x\partial y}\,\mathrm{d}s,
\end{aligned}
\end{equation}
where $i''=3$ and $i'=4$ for $i=3$, and $i''=2$ and $i'=1$ for $i=1$.
This gives
\begin{equation}
\label{e: derivation x scalar}
\begin{aligned}
\frac{1}{h_x}&\left(\int_{E_{3,K}}\left(\frac{\partial w_h}{\partial y}-\mathcal{P}_0^{E_{3,K}}\frac{\partial w_h}{\partial y}\right)\xi_{3,K}\,\mathrm{d}s
-\int_{E_{1,K}}\left(\frac{\partial w_h}{\partial y}-\mathcal{P}_0^{E_{1,K}}\frac{\partial w_h}{\partial y}\right)\xi_{1,K}\,\mathrm{d}s
\right)\\
&=\frac{1}{6}\int_{x'_K}^{x''_K}\frac{\partial^2 w_h}{\partial x\partial y}\bigg|_{y=y''_K}
-\frac{\partial^2 w_h}{\partial x\partial y}\bigg|_{y=y'_K}\,\mathrm{d}x
=\frac{1}{6}\int_K\frac{\partial^3 w_h}{\partial x\partial y^2}\,\mathrm{d}x\mathrm{d}y\\
&=\frac{h_x}{12}\int_K\frac{\partial\xi_{3,K}}{\partial x}\frac{\partial^3 w_h}{\partial x\partial y^2}\,\mathrm{d}x\mathrm{d}y,
\end{aligned}
\end{equation}
and thus
\begin{equation}
\label{e: equal scalar x}
I_{1,K}(u,w_h)=I'_{1,K}(u,w_h),~\mbox{if}~\frac{\partial^2 u}{\partial y^2}=x.
\end{equation}
It follows from (\ref{e: equal scalar y}) and (\ref{e: equal scalar x}) that
\begin{equation}
\label{e: key 1}
I_{1,K}(u,w_h)-I'_{1,K}(u,w_h)=0,~\forall \frac{\partial^2u}{\partial y^2}\in P_1(K),~\forall w_h\in W_K.
\end{equation}
Owing to (\ref{e: bilinear scalar est 1}), (\ref{e: bilinear scalar est 2}), (\ref{e: key 1}) and the Bramble-Hilbert lemma, we derive
\begin{equation}
\label{e: bilinear scalar est 3}
|I_{1,K}(u,w_h)-I'_{1,K}(u,w_h)|\leq Ch^2\left|\frac{\partial^2u}{\partial y^2}\right|_{2,K}|w_h|_{2,K}
\leq Ch^2|u|_{4,K}|w_h|_{2,K}.
\end{equation}
Substituting (\ref{e: bilinear scalar est 3}) into (\ref{e: I3}) results in
\begin{equation}
\label{e: I1K}
\sum_{K\in\mathcal{T}_h}I_{1,K}(u,w_h)\leq \sum_{K\in\mathcal{T}_h}I'_{1,K}(u,w_h)+Ch^2|u|_4|w_h|_{2,h}.
\end{equation}

Now we are in the position to estimate the right-hand side in (\ref{e: I1K}).
Indeed,
\[
\begin{aligned}
\sum_{K\in\mathcal{T}_h}I'_{1,K}(u,w_h)&=\frac{h_x^2}{12}\sum_{K\in\mathcal{T}_h}\int_{y'_K}^{y''_K}\left(\int_{x'_K}^{x''_K}
\frac{\partial^3u}{\partial x\partial y^2}\frac{\partial^3w_h}{\partial x\partial y^2}+\frac{\partial^4u}{\partial x^2\partial y^2}\frac{\partial^2w_h}{\partial y^2}\,\mathrm{d}x\right)\mathrm{d}y\\
&=\frac{h_x^2}{12}\sum_{K\in\mathcal{T}_h}\int_{y'_K}^{y''_K}\frac{\partial^3u}{\partial x\partial y^2}\frac{\partial^2w_h}{\partial y^2}\bigg|_{x=x''_K}-\frac{\partial^3u}{\partial x\partial y^2}\frac{\partial^2w_h}{\partial y^2}\bigg|_{x=x'_K}\,\mathrm{d}y\\
&=\frac{h_x^2}{12}\sum_{K\in\mathcal{T}_h}\Bigg(\int_{E_{2,K}}\frac{\partial^3u}{\partial x\partial y^2}
\left(\frac{\partial^2w_h}{\partial y^2}-\frac{\partial^2\mathcal{I}^{S_2}_Kw_h}{\partial y^2}\right)\,\mathrm{d}s\\
&~~~~~~~~~~~~~~~~~~-\int_{E_{4,K}}\frac{\partial^3u}{\partial x\partial y^2}
\left(\frac{\partial^2w_h}{\partial y^2}-\frac{\partial^2\mathcal{I}^{S_2}_Kw_h}{\partial y^2}\right)\,\mathrm{d}s\Bigg),
\end{aligned}
\]
where we have used the integration by parts formula,
the weak continuity of $w_h$ and the uniformity of the partition $\mathcal{T}_h$.
For each $K\in\mathcal{T}_h$,
\[
\begin{aligned}
(w_h|_K)|_{E_{2,K}}&\in P_2(E_{2,K})\oplus\mathrm{span}\{x^3,x^4\},\\
(w_h|_K)|_{E_{4,K}}&\in P_2(E_{4,K})\oplus\mathrm{span}\{x^3,x^4\}.
\end{aligned}
\]
Note that $x^3$ and $x^4$ are independent of $y$,
therefore a closer observation for $\mathcal{I}^{S_2}_h$ gives
\begin{equation}
\label{e: key 2}
(w_h|_K-\mathcal{I}^{S_2}_hw_h)\big|_{E_{2,K}}=(w_h|_K-\mathcal{I}^{S_2}_hw_h)\big|_{E_{4,K}},~\forall w_h\in W_h.
\end{equation}
Hence, it follows from a standard argument and the inverse inequality that
\begin{equation}
\label{e: tangential estimate scalar}
\begin{aligned}
\left|\sum_{K\in\mathcal{T}_h}I'_{1,K}(u,w_h)\right|&=\frac{h_x^2}{12}\Bigg|\sum_{K\in\mathcal{T}_h}\Bigg(\int_{E_{2,K}}
\left(\frac{\partial^3u}{\partial x\partial y^2}-\mathcal{P}_0^K\frac{\partial^3u}{\partial x\partial y^2}\right)
\left(\frac{\partial^2w_h}{\partial y^2}-\frac{\partial^2\mathcal{I}^{S_2}_hw_h}{\partial y^2}\right)\,\mathrm{d}s\\
&~~~~~~~~~~~~~~~~~~-\int_{E_{4,K}}
\left(\frac{\partial^3u}{\partial x\partial y^2}-\mathcal{P}_0^K\frac{\partial^3u}{\partial x\partial y^2}\right)
\left(\frac{\partial^2w_h}{\partial y^2}-\frac{\partial^2\mathcal{I}^{S_2}_hw_h}{\partial y^2}\right)\,\mathrm{d}s\Bigg)\Bigg|\\
&\leq Ch^2\sum_{K\in\mathcal{T}_h}\sum_{i=2,4}Ch^{1/2}\left|\frac{\partial^3u}{\partial x\partial y^2}\right|_{1,K}
Ch^{-2}\|w_h-\mathcal{I}^{S_2}_Kw_h\|_{0,E_{i,K}}\\
&\left\{
\begin{aligned}
&\leq Ch^2\sum_{K\in\mathcal{T}_h}\sum_{i=2,4}Ch^{1/2}|u|_{4,K}Ch^{-2}Ch^{3/2}|w_h|_{2,K}\leq Ch^2|u|_4|w_h|_{2,h};\\
&\leq Ch^2\sum_{K\in\mathcal{T}_h}\sum_{i=2,4}Ch^{1/2}|u|_{4,K}Ch^{-2}Ch^{5/2}|w_h|_{3,K}\leq Ch^3|u|_4|w_h|_{3,h}.
\end{aligned}
\right.
\end{aligned}
\end{equation}
Combining this estimate with (\ref{e: I1K}) we get
\[
\left|\sum_{K\in\mathcal{T}_h}I_{1,K}(u,w_h)\right|\leq Ch^2|u|_4|w_h|_{2,h}.
\]
Similarly, it holds that
\[
\left|\sum_{K\in\mathcal{T}_h}I_{2,K}(u,w_h)\right|\leq Ch^2|u|_4|w_h|_{2,h},
\]
which along with (\ref{e: I1}), (\ref{e: I2}), (\ref{e: I4}) and (\ref{e: I3}) gives
\[
|E_{2,h}(u,w_h)|\leq Ch^2|u|_4|w_h|_{2,h},
\]
and the proof is completed.
\end{proof}

Thanks to the zeroth order weak continuity for the gradients of the functions in $W_h$,
we can obtain the discrete $H^1$ error estimate using the duality argument provided in \cite{Shi1986,Park2013}.
To proceed, we need to construct an auxiliary $C^0$ nonconforming element with the first order orthogonality for normal derivatives, which will be carried out in two steps.
In the first step,
define the local shape function space
\[
\widetilde{W}_K^*=P_3(K)\oplus\mathrm{span}\{x^3y,xy^3\}\oplus\mathrm{span}\{b_K\phi_{i,K},~i=1,2,\ldots,8\},~K\in\mathcal{T}_h,
\]
where $b_K$ is a 4-degree bubble polynomial fulfilling $b_K|_{\partial K}=0$.
The functions $\phi_{i,K}$ are selected such that $\widetilde{W}_K^*$ is unisolvent to the DoF set
\[
\widetilde{T}_K^*=\left\{v(V_{i,K}),\nabla v(V_{i,K}),\int_{E_{i,K}}\frac{\partial v}{\partial \bm{n}}\,\mathrm{d}s,
\int_{E_{i,K}}\frac{\partial v}{\partial \bm{n}}\xi_{i,K}\,\mathrm{d}s,~i=1,2,3,4\right\},~\forall v\in \widetilde{W}_K^*.
\]
According to \cite{Chen2012}, a successful selection of these $\phi_{i,\widehat{K}}$ over the reference square $\widehat{K}=[-1,1]^2$ can be given as
\[
\widehat{x},\widehat{y},\widehat{x}^2\widehat{y},\widehat{x}\widehat{y}^2,
\widehat{x}^4,\widehat{y}^4,\widehat{x}^3\widehat{y},\widehat{x}\widehat{y}^3,
\]
then the physical $\phi_{i,K}$ will be obtained by an affine equivalent technique, $i=1,2,\ldots,8$.
The second step is to take a subspace of $\widetilde{W}_K^*$, named as $W_K^*$,
satisfying the following four linear relations of the DoFs in $\widetilde{T}_K^*$:
\begin{equation}
\label{e: linear relations scalar}
\begin{aligned}
\frac{1}{h_x}\int_{E_{i,K}}\frac{\partial v}{\partial y}\xi_{i,K}\,\mathrm{d}s
&=\frac{1}{6}\left(\frac{\partial v}{\partial y}(V_{i'',K})-\frac{\partial v}{\partial y}(V_{i',K})\right),~i=1,3,\\
\frac{1}{h_y}\int_{E_{j,K}}\frac{\partial v}{\partial x}\xi_{j,K}\,\mathrm{d}s
&=\frac{1}{6}\left(\frac{\partial v}{\partial x}(V_{j'',K})-\frac{\partial v}{\partial x}(V_{j',K})\right),~j=2,4,
~\forall v\in W_K^*,
\end{aligned}
\end{equation}
where $i''=3$ and $i'=4$ for $i=3$, and $i''=2$ and $i'=1$ for $i=1$;
$j''=3$ and $j'=2$ for $j=2$, and $j''=4$ and $j'=1$ for $j=4$.
By the Simpson quadrature rule, we find $P_3(K)\subset W_K^*$.
The actual DoF set is then set as
\[
T_K^*=\left\{v(V_{i,K}),\nabla v(V_{i,K}),\int_{E_{i,K}}\frac{\partial v}{\partial \bm{n}}\,\mathrm{d}s,~i=1,2,3,4\right\},~\forall v\in W_K^*,
\]
and the global finite element space $W_h^*$ reads
\[
\begin{aligned}
W_h^*=\Bigg\{w&\in L^2(\Omega):~w|_K\in W_K^*,~\forall K\in\mathcal{T}_h,
~\mbox{$w$ and $\nabla w$ are continuous at all}\\
&\mbox{$V\in\mathcal{V}_h^i$ and vanish at all $V\in\mathcal{V}_h^b$},
~\int_E\left[\frac{\partial w}{\partial\bm{n}_E}\right]_E\,\mathrm{d}s=0
~\mbox{for all $E\in\mathcal{E}_h$}\Bigg\}.
\end{aligned}
\]
Clearly, $W_h^*\subset H_0^1(\Omega)$.
For a fixed $K$, the interpolation $\mathcal{I}_K^*$ from $H^3(K)$ to $W_K^*$ is defined such that $\tau(\mathcal{I}_K^*v)=\tau(v)$,
$\forall v\in H^3(K)$, $\forall \tau\in T_K^*$.
The global version $\mathcal{I}_h^*$ from $H^3(\Omega)\cap H_0^2(\Omega)$ to $W_h^*$ is naturally determined by $\mathcal{I}_h^*|_K=\mathcal{I}_K^*$, $\forall K\in\mathcal{T}_h$.
This element is indeed a rectangular extension of the triangular $C^0$ nonconforming element designed by Gao et al.~\cite{Gao2011}.

\begin{theorem}
\label{th: converge scalar H1}
Let the solution domain $\Omega$ be convex.
In addition, under the same assumptions in Theorem \ref{th: converge scalar}, we have
\[
|u-u_h|_{1,h}\leq Ch^3|u|_4.
\]
\end{theorem}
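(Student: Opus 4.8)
The plan is to run an Aubin--Nitsche type duality argument, as in \cite{Shi1986,Park2013}, in which the auxiliary element $W_h^*$ introduced above plays the role of a higher-continuity partner for the dual solution. Write $e=u-u_h$. Since the $k=4$, $j=1$ instance of \eqref{e: interp err scalar} gives $|u-\mathcal{I}_hu|_{1,h}\le Ch^3|u|_4$, it is enough to bound $|e|_{1,h}$. Introduce the dual biharmonic problem: find $\Phi\in H_0^2(\Omega)$ with
\[
(\nabla^2\Phi,\nabla^2 v)=\sum_{K\in\mathcal{T}_h}(\nabla e,\nabla v)_K,\qquad\forall\,v\in H_0^2(\Omega),
\]
whose right-hand side is a bounded functional on $H^{-1}(\Omega)$ of norm $\le C|e|_{1,h}$; the convexity of $\Omega$ then yields the regularity estimate $\|\Phi\|_3\le C|e|_{1,h}$. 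We only obtain $H^3$, not $H^4$, regularity, but that costs nothing here: the primal error is already $O(h^2)$, and the missing power of $h$ will be furnished by the $O(h)$ interpolation error of $\Phi$.

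The next step is to expand $|e|_{1,h}^2=\sum_K(\nabla e,\nabla e)_K$ by combining the dual equation (tested with $v=u$), the weak form \eqref{e: weak scalar}, the discrete problem \eqref{e: discrete weak scalar} (tested with $\mathcal{I}_h\Phi\in W_h$), and the fact that $\Phi-\mathcal{I}_h\Phi$ inherits the weak-continuity conditions that define $W_h$. This produces a decomposition of $|e|_{1,h}^2$ into three pieces: (i) a Galerkin-type remainder $\sum_K(\nabla^2 e,\nabla^2(\Phi-\mathcal{I}_h\Phi))_K$, bounded by $|e|_{2,h}\,|\Phi-\mathcal{I}_h\Phi|_{2,h}\le Ch^2|u|_4\cdot Ch|\Phi|_3\le Ch^3|u|_4|e|_{1,h}$ via \eqref{e: err scalar} and \eqref{e: interp err scalar}; (ii) the primal consistency error $E_{2,h}(u,\Phi-\mathcal{I}_h\Phi)$; and (iii) the consistency error of the dual problem, $E_h^{\mathrm{dual}}(\Phi,u_h):=\sum_K(\nabla^2\Phi,\nabla^2 u_h)_K-\sum_K(\nabla u_h,\nabla e)_K$, which vanishes on $H_0^2(\Omega)$ but not on $W_h$. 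For (ii) one reruns the analysis of Theorem \ref{th: converge scalar}: each of the bounds $Ch^2|u|_4|w_h|_{2,h}$, $Ch^3|u|_4|w_h|_{3,h}$ occurring in \eqref{e: I1}, \eqref{e: I2}, \eqref{e: I4} and \eqref{e: tangential estimate scalar} becomes $Ch^3|u|_4|e|_{1,h}$ once $w_h=\Phi-\mathcal{I}_h\Phi$ is inserted and $|\Phi-\mathcal{I}_h\Phi|_{2,h}\le Ch|\Phi|_3$, $|\Phi-\mathcal{I}_h\Phi|_{3,h}\le C|\Phi|_3$ are used; this is exactly why the stronger $|w_h|_{3,h}$-alternatives were kept in Theorem \ref{th: converge scalar}, and it is convenient here to interpolate $\Phi$ by $\mathcal{I}_h^*$ into $W_h^*$ whenever the cell-wise polynomial structure underlying identities such as \eqref{e: normal relation scalar} is needed.

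The decisive term, and the place where $W_h^*$ is indispensable, is (iii). An integration by parts as in \eqref{e: E2h} converts $E_h^{\mathrm{dual}}(\Phi,u_h)$ into boundary integrals of the form $\sum_K\int_{\partial K}\frac{\partial^2\Phi}{\partial\bm{n}^2}\frac{\partial v_h}{\partial\bm{n}}\,\mathrm{d}s$ and tangential analogues; since $\Phi$ is merely $H^3$ and the functions of $W_h$ possess only the zeroth order weak continuity \eqref{e: normal weak scalar} of the normal derivative, a direct estimate stalls at $O(h)$. The remedy is to pass through $W_h^*$: exploiting $P_3(K)\subset W_K^*$ one first proves $|\Phi-\mathcal{I}_h^*\Phi|_{j,h}\le Ch^{3-j}|\Phi|_3$ for $j=0,1,2$, and then the first order weak continuity of the normal derivatives of $W_h^*$-functions --- together with the linear relations \eqref{e: linear relations scalar} defining $W_K^*$, which mimic the Simpson-rule identity \eqref{e: normal relation scalar} --- lets each offending boundary integral be re-paired so as to gain the extra power of $h$. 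Two remainders are then left over: the discrepancy $\sum_K(\nabla^2 e,\nabla^2(\mathcal{I}_h\Phi-\mathcal{I}_h^*\Phi))_K$, which is $O(h^3)$ because $|\mathcal{I}_h\Phi-\mathcal{I}_h^*\Phi|_{2,h}\le Ch|\Phi|_3$, and the leftover $W_h^*$-consistency tested against the smooth interpolant $\mathcal{I}_hu$, whose weak-continuity defects are of high order since $u\in H^4(\Omega)$. Collecting (i)--(iii) gives $|e|_{1,h}^2\le Ch^3|u|_4|e|_{1,h}$, hence the assertion. I expect the whole difficulty to be concentrated in (iii): one must match, edge by edge, every boundary contribution against either a high-order polynomial-approximation residual of $u$ or a weak-continuity residual of the discrete test function, and verify that the powers of $h$ sum to $3$ rather than $2$; but each ingredient needed is a refinement of a computation already performed for Theorem \ref{th: converge scalar}.
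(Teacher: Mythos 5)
Your overall architecture (an Aubin--Nitsche argument with only $H^3$ dual regularity, splitting the error into a Galerkin remainder, a primal consistency error and a dual consistency error, with $W_h^*$ entering somewhere) matches the paper's, but the decisive estimate is misplaced and, as stated, does not close. First, a bookkeeping issue: since $e_h\notin H_0^1(\Omega)$, the identity $(\nabla^2\Phi,\nabla^2 v)=\sum_K(\nabla e_h,\nabla v)_K$ extends by density only to $v\in H_0^1(\Omega)$, so your ``dual consistency error'' $\sum_K(\nabla^2\Phi,\nabla^2 u_h)_K-\sum_K(\nabla u_h,\nabla e_h)_K$ leaves, after one integration by parts, a volume mismatch $\sum_K(-\nabla\Delta\Phi-\nabla e_h,\nabla u_h)_K$ that you cannot control: the two $L^2$ fields agree only when paired with gradients of $H_0^1$ functions. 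The paper resolves this by inserting the conforming companion $\mathcal{I}^{S_2}_he_h\in H_0^1(\Omega)$ and splitting $|e_h|_{1,h}^2$ accordingly (see \eqref{e: decompose scalar}--\eqref{e: dual error part 2 main scalar}); some such device is not optional.

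The more serious gap is in your item (ii). For $I_1$, $I_2$, $I_4$ the $|w_h|_{3,h}$-alternatives do upgrade to $O(h^3)$ when tested with a $W_h$-interpolant of the dual solution, as you say. But for $I_3$ the machinery of Theorem \ref{th: converge scalar} only delivers $|I_3(u,w_h)|\leq Ch^2|u|_4|w_h|_{2,h}$ for a generic $w_h\in W_h$: the Bramble--Hilbert part $I_{1,K}-I'_{1,K}$ in \eqref{e: bilinear scalar est 3} has no $h^3|w_h|_{3,h}$ variant, so $I_3(u,\mathcal{I}_h\Phi)$ (equivalently $I_3(u,\Phi-\mathcal{I}_h\Phi)$) stalls at $Ch^2|u|_4|\Phi|_3$ --- exactly one order short. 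This, not the dual consistency error, is where $W_h^*$ is indispensable: the dual consistency terms $I_3(\Phi,e_h)$, $I_4(\Phi,e_h)$ are already $O(h)\cdot|e_h|_{2,h}=O(h^3)$ by the zeroth-order weak continuity alone, so no re-pairing is needed there. The paper's fix is to interpolate the dual solution twice, $\phi^*=\mathcal{I}_h^*\phi$ and then $\mathcal{I}_h\phi^*\in W_h$, and to split $I_3(u,\mathcal{I}_h\phi^*)=I_3(u,\mathcal{I}_h\phi^*-\phi^*)+I_3(u,\phi^*)$: the second piece gains $h^{3/2}\cdot h^{3/2}$ from the first-order normal-derivative orthogonality \eqref{e: dual normal weak scalar} of $W_h^*$ paired against $u\in H^4$, and the first piece gains because $|\mathcal{I}_h\phi^*-\phi^*|_{2,K}\leq Ch|\phi|_{3,K}$ \emph{and} the linear relations \eqref{e: linear relations scalar} built into $W_K^*$ reproduce the Simpson identity \eqref{e: normal relation scalar}, so that the cancellation \eqref{e: key 1} still applies to $\mathcal{I}_h\phi^*-\phi^*$. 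Your sketch gestures at $\mathcal{I}_h^*$ ``for convenience'' but does not identify this splitting or the compatibility of \eqref{e: linear relations scalar} with \eqref{e: normal relation scalar}, which is the actual content of the proof; without it the argument yields only $O(h^2)$.
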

\begin{proof}
We first set $e_h=u-u_h$ and consider the following dual problem: Find $\phi\in H_0^2(\Omega)\cap H^3(\Omega)$,
such that
\begin{equation}
\label{e: dual problem scalar}
(\nabla^2 \phi,\nabla^2 v)=\sum_{K\in\mathcal{T}_h}(\nabla e_h,\nabla v)_K,~\forall v\in H_0^2(\Omega).
\end{equation}
For the convex domain $\Omega$,
it follows from \cite{Grisvard1985} that $\Delta^2$ is an isomorphism from $H_0^2(\Omega)\cap H^3(\Omega)$ onto $H^{-1}(\Omega)$, and therefore (\ref{e: dual problem scalar}) has a unique solution with the bound
\begin{equation}
\label{e: dual regular scalar}
|\phi|_3\leq C\sup_{v\in H_0^1(\Omega)}\frac{\sum_{K\in\mathcal{T}_h}(\nabla e_h,\nabla v)_K}{|v|_1}\leq C|e_h|_{1,h}.
\end{equation}
We will use the decomposition
\begin{equation}
\label{e: decompose scalar}
|e_h|_{1,h}^2=\sum_{K\in\mathcal{T}_h}\left(\nabla e_h,\nabla (e_h-\mathcal{I}^{S_2}_he_h)\right)_K+\sum_{K\in\mathcal{T}_h}\left(\nabla e_h,\nabla \mathcal{I}^{S_2}_he_h\right)_K.
\end{equation}
The first summation is estimated with the aid of (\ref{e: err scalar}) as
\begin{equation}
\label{e: dual part 1 scalar}
\begin{aligned}
\left|\sum_{K\in\mathcal{T}_h}\left(\nabla e_h,\nabla (e_h-\mathcal{I}^{S_2}_he_h)\right)_K\right|
&\leq C|e_h|_{1,h}|e_h-\mathcal{I}^{S_2}_he_h|_{1,h}\\
&\leq Ch|e_h|_{1,h}|e_h|_{2,h}\leq Ch^3|u|_4|e_h|_{1,h}.
\end{aligned}
\end{equation}
As far as the second summation in (\ref{e: decompose scalar}) is concerned,
note that $\mathcal{I}^{S_2}_he_h\in H_0^1(\Omega)$,
and a density argument using (\ref{e: dual problem scalar}) with an integration
by parts shows that
\begin{equation}
\label{e: dual error part 2 main scalar}
\begin{aligned}
\left|\sum_{K\in\mathcal{T}_h}\left(\nabla e_h,\nabla \mathcal{I}^{S_2}_he_h\right)_K\right|
&=\left|-\left(\nabla\Delta\phi,\nabla \mathcal{I}^{S_2}_he_h\right)\right|\\
&=\left|\sum_{K\in\mathcal{T}_h}\left(\nabla\Delta\phi,\nabla (e_h-\mathcal{I}^{S_2}_he_h)\right)_K
-\sum_{K\in\mathcal{T}_h}\left(\nabla\Delta\phi,\nabla e_h\right)_K\right|\\
&\leq C|\phi|_3|e_h-\mathcal{I}^{S_2}_he_h|_{1,h}+|I_3(\phi,e_h)|+|I_4(\phi,e_h)|
+\left|\sum_{K\in\mathcal{T}_h}(\nabla^2 \phi,\nabla^2 e_h)_K\right|\\
&\leq Ch|\phi|_3|e_h|_{2,h}+Ch|\phi|_3|e_h|_{2,h}+\left|\sum_{K\in\mathcal{T}_h}(\nabla^2 \phi,\nabla^2 e_h)_K\right|,
\end{aligned}
\end{equation}
where we have used the weak continuity of $e_h$, see e.g.~(\ref{e: normal weak scalar}) and (\ref{e: tangential weak scalar}). The first two terms in the last line of (\ref{e: dual error part 2 main scalar}) are bounded by
\[
Ch|\phi|_3|e_h|_{2,h}\leq Ch^3|\phi|_3|u|_4\leq Ch^3|u|_4|e_h|_{1,h}
\]
according to (\ref{e: err scalar}) and (\ref{e: dual regular scalar}).
Hence it remains to estimate the last term in the last line of (\ref{e: dual error part 2 main scalar}).

To this end,
we select $\phi^*=\mathcal{I}_h^*\phi\in W_h^*$, and then $\mathcal{I}_h\phi^*\in W_h$ due to the definitions of $W_h^*$ and $W_h$.
As a result,
it follows from (\ref{e: discrete weak scalar}) and (\ref{e: E2h}) that
\begin{equation}
\label{e: dual part 2 2 scalar}
\begin{aligned}
\sum_{K\in\mathcal{T}_h}(\nabla^2 \phi,\nabla^2 e_h)_K
&=\sum_{K\in\mathcal{T}_h}(\nabla^2 \mathcal{I}_h\phi^*,\nabla^2 e_h)_K
+\sum_{K\in\mathcal{T}_h}(\nabla^2(\phi-\mathcal{I}_h\phi^*),\nabla^2 e_h)_K\\
&=\sum_{K\in\mathcal{T}_h}(\nabla^2 u, \nabla^2 \mathcal{I}_h\phi^*)_K
-(f,\mathcal{I}_h\phi^*)+\sum_{K\in\mathcal{T}_h}(\nabla^2(\phi-\mathcal{I}_h\phi^*),\nabla^2 e_h)_K\\
&=E_{2,h}(u,\mathcal{I}_h\phi^*)+\sum_{K\in\mathcal{T}_h}(\nabla^2(\phi-\mathcal{I}_h\phi^*),\nabla^2 e_h)_K\\
&=\sum_{i=1}^4I_i(u,\mathcal{I}_h\phi^*)+\sum_{K\in\mathcal{T}_h}(\nabla^2(\phi-\mathcal{I}_h\phi^*),\nabla^2 e_h)_K.
\end{aligned}
\end{equation}
Invoking (\ref{e: I1}), (\ref{e: I2}) and (\ref{e: I4}), one sees
\begin{equation}
\label{e: collect dual 1 scalar}
|I_i(u,\mathcal{I}_h\phi^*)|\leq Ch^3|u|_4|\mathcal{I}_h\phi^*|_{3,h}\leq Ch^3|u|_4|\phi^*|_{3,h}
\leq Ch^3|u|_4|\phi|_3,~i=1,2,4.
\end{equation}
Furthermore, by the triangle inequality,
\begin{equation}
\label{e: collect dual 2 scalar}
\begin{aligned}
\left|\sum_{K\in\mathcal{T}_h}(\nabla^2(\phi-\mathcal{I}_h\phi^*),\nabla^2 e_h)_K\right|
&\leq C|\phi-\mathcal{I}_h\phi^*|_{2,h}|e_h|_{2,h}\\
&\leq C|\phi-\mathcal{I}_h^*\phi|_{2,h}|e_h|_{2,h}+C|\phi^*-\mathcal{I}_h\phi^*|_{2,h}|e_h|_{2,h}\\
&\leq Ch^3|u|_4|\phi|_3.
\end{aligned}
\end{equation}
Hence, we shall estimate $I_3(u,\mathcal{I}_h\phi^*)$.
Indeed, we write $I_3(u,\mathcal{I}_h\phi^*)$ as
\[
I_3(u,\mathcal{I}_h\phi^*)=I_3(u,\mathcal{I}_h\phi^*-\phi^*)+I_3(u,\phi^*).
\]
Note that the linear relations in (\ref{e: linear relations scalar}) and the weak continuity of $\phi^*$ ensure that
\begin{equation}
\label{e: dual normal weak scalar}
\int_E\left[\frac{\partial \phi^*}{\partial \bm{n}_E}\right]_E\xi_E\,\mathrm{d}s=0,~\forall \xi_E\in P_1(E),
~\forall E\in\mathcal{E}_h,
\end{equation}
which gives
\begin{equation}
\label{e: collect dual 3 scalar}
\begin{aligned}
|I_3(u,\phi^*)|&=\left|\sum_{K\in\mathcal{T}_h}\sum_{E\subset\partial K}\int_E
\left(\frac{\partial^2u}{\partial\bm{n}^2}-\mathcal{P}_1^E\frac{\partial^2u}{\partial\bm{n}^2}
\right)\left(\frac{\partial \phi^*}{\partial \bm{n}}-\mathcal{P}_1^E\frac{\partial \phi^*}{\partial \bm{n}}\right)\,\mathrm{d}\sigma\right|\\
&\leq\sum_{K\in\mathcal{T}_h}Ch^{3/2}|u|_{4,K}Ch^{3/2}|\phi^*|_{3,K}\\
&\leq Ch^3|u|_4|\phi^*|_{3,h}\leq Ch^3|u|_4|\phi|_3.
\end{aligned}
\end{equation}

Next we consider $I_3(u,\mathcal{I}_h\phi^*-\phi^*)$.
Just as in (\ref{e: I3}), by the weak continuity of both $W_h$ and $W_h^*$, we can write
\begin{equation}
\label{e: I3 dual}
I_3(u,\mathcal{I}_h\phi^*-\phi^*)=\sum_{K\in\mathcal{T}_h}I_{1,K}(u,\mathcal{I}_h\phi^*-\phi^*)
+\sum_{K\in\mathcal{T}_h}I_{2,K}(u,\mathcal{I}_h\phi^*-\phi^*).
\end{equation}
Let us investigate the relation between $I_{1,K}(u,\phi^*)$ and $I'_{1,K}(u,\phi^*)$
defined in (\ref{e: I3}) and (\ref{e: I'}) for each $K\in\mathcal{T}_h$.
It is clear that (\ref{e: equal scalar y}) still holds if $w_h$ is replaced by $\phi^*$.
Moreover, a careful comparison between (\ref{e: normal relation scalar}) and the first line in (\ref{e: linear relations scalar}) shows that they are precisely the same thing if $w_h$ in (\ref{e: normal relation scalar}) is replaced by $v$
in (\ref{e: linear relations scalar}).
Hence, one can follow the same argument as in (\ref{e: normal relation scalar}) and (\ref{e: derivation x scalar})
to derive an analogue of (\ref{e: equal scalar x}).
As a consequence,
\[
I_{1,K}(u,\phi^*)-I'_{1,K}(u,\phi^*)=0,~\forall \frac{\partial^2u}{\partial y^2}\in P_1(K),
\]
which asserts by (\ref{e: bilinear scalar est 1}), (\ref{e: bilinear scalar est 2}), (\ref{e: key 1}) and the Bramble-Hilbert lemma that
\begin{equation}
\label{e: normal to tangent dual}
\begin{aligned}
\left|I_{1,K}(u,\mathcal{I}_h\phi^*-\phi^*)-I'_{1,K}(u,\mathcal{I}_h\phi^*-\phi^*)\right|
&\leq Ch^2\left|\frac{\partial^2u}{\partial y^2}\right|_{2,K}|\mathcal{I}_h\phi^*-\phi^*|_{2,K}\\
&\leq Ch^3|u|_{4,K}|\phi^*|_{3,K}\leq Ch^3|u|_{4,K}|\phi|_{3,K}.
\end{aligned}
\end{equation}
Substituting (\ref{e: normal to tangent dual}) into (\ref{e: I3 dual}) results in
\begin{equation}
\label{e: I1K dual}
\sum_{K\in\mathcal{T}_h}I_{1,K}(u,\mathcal{I}_h\phi^*-\phi^*)\leq \sum_{K\in\mathcal{T}_h}I'_{1,K}(u,\mathcal{I}_h\phi^*-\phi^*)+Ch^3|u|_4|\phi|_3.
\end{equation}
The right-hand side of (\ref{e: I1K dual}) can be estimated by an integration by parts:
\[
\begin{aligned}
\sum_{K\in\mathcal{T}_h}I'_{1,K}(u,\mathcal{I}_h\phi^*-\phi^*)
=&\frac{h_x^2}{12}\Bigg(\sum_{K\in\mathcal{T}_h}\int_{y'_K}^{y''_K}\frac{\partial^3u}{\partial x\partial y^2}\frac{\partial^2\mathcal{I}_h\phi^*}{\partial y^2}\bigg|_{x=x''_K}-\frac{\partial^3u}{\partial x\partial y^2}\frac{\partial^2\mathcal{I}_h\phi^*}{\partial y^2}\bigg|_{x=x'_K}\,\mathrm{d}y\\
&~~~~~~~~~-\sum_{K\in\mathcal{T}_h}\int_{y'_K}^{y''_K}\frac{\partial^3u}{\partial x\partial y^2}\frac{\partial^2\phi^*}{\partial y^2}\bigg|_{x=x''_K}-\frac{\partial^3u}{\partial x\partial y^2}\frac{\partial^2\phi^*}{\partial y^2}\bigg|_{x=x'_K}\,\mathrm{d}y
\Bigg).\\
\end{aligned}
\]
The last summation immediately vanishes as $\phi^*\in H_0^1(\Omega)$.
Therefore using the argument in (\ref{e: tangential estimate scalar}), one derives
\[
\left|\sum_{K\in\mathcal{T}_h}I'_{1,K}(u,\mathcal{I}_h\phi^*-\phi^*)\right|
\leq Ch^3|u|_4|\mathcal{I}_h\phi^*|_{3,h}\leq Ch^3|u|_4|\phi|_3,
\]
which along with (\ref{e: I1K dual}) gives
\begin{equation}
\label{e: I1K final dual}
\left|\sum_{K\in\mathcal{T}_h}I_{1,K}(u,\mathcal{I}_h\phi^*-\phi^*)\right|\leq Ch^3|u|_4|\phi|_3.
\end{equation}
In a similar fashion, it holds that
\begin{equation}
\label{e: I2K final dual}
\left|\sum_{K\in\mathcal{T}_h}I_{2,K}(u,\mathcal{I}_h\phi^*-\phi^*)\right|\leq Ch^3|u|_4|\phi|_3.
\end{equation}
Substituting (\ref{e: I1K final dual}), (\ref{e: I2K final dual}) into (\ref{e: I3 dual})
and combining (\ref{e: collect dual 1 scalar}), (\ref{e: collect dual 2 scalar}) and (\ref{e: collect dual 3 scalar})
with (\ref{e: dual part 2 2 scalar}), we have
\[
\left|\sum_{K\in\mathcal{T}_h}(\nabla^2 \phi,\nabla^2 e_h)_K\right|\leq Ch^3|u|_4|\phi|_3\leq Ch^3|u|_4|e_h|_{1,h}.
\]
Note that we have again used the regularity condition (\ref{e: dual regular scalar}).
Finally, it follows from (\ref{e: decompose scalar}), (\ref{e: dual part 1 scalar}) and (\ref{e: dual error part 2 main scalar}) that
\[
|e_h|_{1,h}^2\leq Ch^3|u|_4|e_h|_{1,h}.
\]
The desired conclusion is obtained by dividing both sides by $|e_h|_{1,h}$.
\end{proof}

\begin{remark}
In our previous work \cite{Zhou2018} over general quadrilateral meshes,
the shape function space $W_K$ is of degree six,
so that the edge integrals can be replaced by edge midpoint values in the DoF set,
which seems cheaper in practical applications, especially for nonhomogeneous boundary conditions.
Nevertheless, when applied to uniform rectangular meshes,
the key features (\ref{e: key 1}) and (\ref{e: key 2}) remain valid,
and so the error estimates in Theorems \ref{th: converge scalar} and \ref{th: converge scalar H1} still hold.
\end{remark}

\section{High accuracy analysis of $\bm{V}_h\times P_h$ for Stokes problems}
\label{s: vector}

In a similar fashion, we will give the high accuracy analysis of the Stokes element $\bm{V}_h\times P_h$.
The Stokes problem for incompressible flow reads:
For a given force field $\bm{f}$,
find the velocity $\bm{u}$ and the pressure $p$ satisfying
\begin{equation}
\label{e: stokes problem}
\begin{aligned}
-\Delta\bm{u}+\nabla p&=\bm{f}~~~~~~&\mbox{in}~\Omega,\\
\mbox{div}\,\bm{u}&=0~&\mbox{in}~\Omega,\\
\bm{u}&=\bm{0}~&\mbox{on}~\partial\Omega,
\end{aligned}
\end{equation}
A weak formulation of (\ref{e: stokes problem}) is to find a
pair $(\bm{u},p)\in [H_0^1(\Omega)]^2\times L_0^2(\Omega)$ such that
\begin{equation}
\label{e: stokes continuous}
\begin{aligned}
(\nabla\bm{u},\nabla\bm{v})-(\mathrm{div}\,\bm{v},p)&=(\bm{f},\bm{v}),~&\forall \bm{v}\in[H_0^1(\Omega)]^2,\\
(\mathrm{div}\,\bm{u},q)&=0,~&\forall q\in L_0^2(\Omega).
\end{aligned}
\end{equation}
We will approximate (\ref{e: stokes continuous}) by seeking $(\bm{u}_h,p_h)\in\bm{V}_h\times P_h$ yielding
\begin{equation}
\label{e: stokes discrete}
\begin{aligned}
a_h(\bm{u}_h,\bm{v}_h)-b_h(\bm{v}_h,p_h)&=(\bm{f},\bm{v}_h),~\forall \bm{v}_h\in\bm{V}_h,\\
b_h(\bm{u}_h,q_h)&=0,~~\forall q_h\in P_h
\end{aligned}
\end{equation}
with bilinear forms
\[
a_h(\bm{u}_h,\bm{v}_h)=\sum_{K\in\mathcal{T}_h}(\nabla\bm{u}_h,\nabla\bm{v}_h)_K,
~b_h(\bm{v}_h,q_h)=(\mathrm{div}_h\bm{v}_h,q_h).
\]
Following a classical consistency error analysis,
only an $O(h)$ convergence order can be derived for $|\bm{u}-\bm{u}_h|_{1,h}$,
since one only has the following weak continuity for the tangential component:
\begin{equation}
\label{e: tangential weak stokes}
\int_E\left[\bm{w}_h\cdot\bm{t}_E\right]_E\,\mathrm{d}s=0,~\forall E\in\mathcal{E}_h,~\forall \bm{w}_h\in \bm{V}_h.
\end{equation}
As before, we have the following higher order error estimate.

\begin{theorem}
\label{th: converge stokes}
The discrete problem (\ref{e: stokes discrete}) has a unique solution $(\bm{u}_h,p_h)\in\bm{V}_h\times P_h$.
Let $(\bm{u},p)\in\left([H_0^1(\Omega)\cap H^3(\Omega)]^2\right)\times(L_0^2(\Omega)\cap H^2(\Omega))$ be the weak solution of (\ref{e: stokes continuous}) fulfilling $\bm{u}=\curl\,\phi$, $\phi\in H_0^2(\Omega)\cap H^4(\Omega)$.
If $\mathcal{T}_h$ is uniform, the following estimates hold:
\begin{equation}
\label{e: converge stokes}
|\bm{u}-\bm{u}|_{1,h}\leq Ch^2(|\phi|_4+|p|_2),~\|p-p_h\|_0\leq Ch|p|_1+Ch^2(|\phi|_4+|p|_2).
\end{equation}
\end{theorem}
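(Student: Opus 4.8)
The plan is to carry the argument of Theorem~\ref{th: converge scalar} over to the Stokes pair by routing it through the complex (\ref{e: discrete complex}). Well-posedness of (\ref{e: stokes discrete}) is classical saddle-point theory: $a_h(\bm{v}_h,\bm{v}_h)=|\bm{v}_h|_{1,h}^2$ is coercive because $|\cdot|_{1,h}$ is a norm on $\bm{V}_h$, and the discrete inf-sup condition between $\bm{V}_h$ and $P_h$ comes from the exactness of the lower row of (\ref{e: discrete complex}) together with the $H^1$-stability of the Fortin operator $\bm{\Pi}_h$, as in \cite{Zhou2018}. Testing the second discrete equation against a suitable $q_h\in P_h$ and using the weak continuity of $\bm{v}\cdot\bm{n}$ yields $\mathrm{div}_h\bm{u}_h=0$, so $\bm{u}_h$ lies in the discrete kernel $Z_h:=\{\bm{v}_h\in\bm{V}_h:\mathrm{div}_h\bm{v}_h=0\}=\curl_h W_h$, while $\bm{u}=\curl\,\phi$.

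For the velocity I would run the second Strang lemma on $Z_h$, where (\ref{e: stokes discrete}) collapses to $a_h(\bm{u}_h,\bm{w}_h)=(\bm{f},\bm{w}_h)$. The best-approximation term is handled by $\bm{\Pi}_h\bm{u}=\curl_h\mathcal{I}_h\phi\in Z_h$ and (\ref{e: interp err vector}), giving $Ch^2|\phi|_4$. Integrating by parts cell by cell with $-\Delta\bm{u}+\nabla p=\bm{f}$ and $\mathrm{div}_h\bm{w}_h=0$, the consistency error becomes $J_1(p,\bm{w}_h)+J_2(\bm{u},\bm{w}_h)$ with $J_1(p,\bm{w}_h)=-\sum_K\int_{\partial K}p\,\bm{w}_h\cdot\bm{n}\,\mathrm{d}s$ and $J_2(\bm{u},\bm{w}_h)=\sum_K\int_{\partial K}\frac{\partial\bm{u}}{\partial\bm{n}}\cdot\bm{w}_h\,\mathrm{d}s$. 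Since $\bm{v}\cdot\bm{n}$ carries a first-order weak continuity ($\int_E[\bm{v}\cdot\bm{n}_E]\xi\,\mathrm{d}s=0$ for all $\xi\in P_1(E)$, from the DoFs $\sigma_{i+4,K}$), inserting $\mathcal{P}_1^E p$ makes $J_1$ of order $h^2$, and splitting $\bm{w}_h$ on $\partial K$ into its normal and tangential parts disposes of the normal part of $J_2$ the same way; these steps are routine.

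The crux is the tangential part $\sum_K\int_{\partial K}(\frac{\partial\bm{u}}{\partial\bm{n}}\cdot\bm{t})(\bm{w}_h\cdot\bm{t})\,\mathrm{d}s$, for which a naive estimate gives only $O(h)$; I would recover the missing order by replaying the analysis of the term $I_3$ in Theorem~\ref{th: converge scalar}. Writing $\bm{u}=\curl\,\phi$ one has $\frac{\partial\bm{u}}{\partial\bm{n}}\cdot\bm{t}=-\frac{\partial^2\phi}{\partial\bm{n}^2}$ on each edge, and for $\bm{w}_h=\curl_h\chi_h\in Z_h$ this tangential term equals $I_3(\phi,\chi_h)$ with $\chi_h\in W_h$, so the scalar estimate applies directly; but the same bound is needed on the whole of $\bm{V}_h$ for the pressure step, and it is obtained by repeating the scalar argument at the vector level. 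The structural ingredients (\ref{e: key 1}) and (\ref{e: key 2}) have exact counterparts: on a horizontal edge $\bm{v}\cdot\bm{t}$ restricts to a polynomial of degree $\le 2$ with weakly continuous mean --- the structure of $\frac{\partial w_h}{\partial\bm{n}}$ used in (\ref{e: normal relation scalar}); and after an integration by parts over the cell one is left with integrals over the two vertical edges of each cell against $\partial_y(\bm{v}\cdot\bm{n})$, where the two edge-traces of $\bm{v}\cdot\bm{n}$ differ by an element of $P_1$ (because the monomials $y^2,y^3$ of $\bm{V}_K$ do not involve $x$) while the $P_1$-part of $\bm{v}\cdot\bm{n}$ is continuous across the edge --- so these take over the roles of $\frac{\partial^2 w_h}{\partial y^2}$ and $\frac{\partial^2\mathcal{I}^{S_2}_h w_h}{\partial y^2}$ in (\ref{e: tangential estimate scalar}). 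Running (\ref{e: I'})--(\ref{e: tangential estimate scalar}) with $(\frac{\partial^2\phi}{\partial\bm{n}^2},\ \bm{v}\cdot\bm{t})$ in place of $(\frac{\partial^2 u}{\partial\bm{n}^2},\ \frac{\partial w_h}{\partial\bm{n}})$, and using the uniformity of $\mathcal{T}_h$ to cancel the inter-element contributions, gives $|J_2(\bm{u},\bm{v}_h)|\le Ch^2|\phi|_4|\bm{v}_h|_{1,h}$ for \emph{every} $\bm{v}_h\in\bm{V}_h$. Combined with $|J_1(p,\bm{v}_h)|\le Ch^2|p|_2|\bm{v}_h|_{1,h}$ and the interpolation bound, the Strang estimate yields $|\bm{u}-\bm{u}_h|_{1,h}\le Ch^2(|\phi|_4+|p|_2)$.

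For the pressure, set $q_h:=\mathcal{P}_h p-p_h\in P_h$ and use the discrete inf-sup, $\|q_h\|_0\le C\sup_{\bm{v}_h\in\bm{V}_h}b_h(\bm{v}_h,q_h)/|\bm{v}_h|_{1,h}$. Integrating by parts and using $b_h(\bm{v}_h,\mathcal{P}_h p)=(\mathrm{div}_h\bm{v}_h,p)$ together with (\ref{e: stokes discrete}) reduces $b_h(\bm{v}_h,q_h)$ to $a_h(\bm{u}-\bm{u}_h,\bm{v}_h)-J_1(p,\bm{v}_h)-J_2(\bm{u},\bm{v}_h)$, whose three terms were already bounded by $Ch^2(|\phi|_4+|p|_2)|\bm{v}_h|_{1,h}$ above (this is exactly why the consistency bound had to be proved on all of $\bm{V}_h$). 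Hence $\|\mathcal{P}_h p-p_h\|_0\le Ch^2(|\phi|_4+|p|_2)$, and the triangle inequality with $\|p-\mathcal{P}_h p\|_0\le Ch|p|_1$ closes the argument. The step I expect to be the main obstacle is precisely this tangential part of $J_2$: verifying that the structural identities (\ref{e: key 1}) and (\ref{e: key 2}) genuinely transfer to $\bm{V}_h$ --- keeping straight which moments of $\bm{v}\cdot\bm{n}$ and $\bm{v}\cdot\bm{t}$ are weakly continuous on horizontal versus vertical edges --- and that the resulting $O(h^2)$ estimate holds for the full space $\bm{V}_h$, not just its divergence-free subspace.
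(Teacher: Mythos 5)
Your proposal is correct and follows essentially the same route as the paper: the abstract mixed error estimate with the Fortin operator $\bm{\Pi}_h$ from the complex, the split of the consistency error into the pressure term and the normal/tangential parts of $\int_{\partial K}\frac{\partial\bm{u}}{\partial\bm{n}}\cdot\bm{w}_h\,\mathrm{d}s$, routine $O(h^2)$ bounds from the first-order weak continuity of $\bm{w}_h\cdot\bm{n}$, and the replay of the $I_3$-machinery (correction term, Simpson rule on $w_{h1}|_{E_{i,K}}\in P_2$, Bramble--Hilbert, cell-wise integration by parts, and the $P_1$-edge-interpolation identity playing the role of (\ref{e: key 2})) for the tangential part, proved on all of $\bm{V}_h$. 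The structural points you flag as the main obstacle are exactly the ones the paper verifies via (\ref{e: wh1 relation vector}) and (\ref{e: key vector 2}).
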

\begin{proof}
From the commutative diagram (\ref{e: discrete complex}) we see $\mathrm{div}_h\bm{V}_h\subset P_h$ and
\[
\mathrm{div}_h\bm{\Pi}_h\bm{v}=\mathcal{P}_h\mathrm{div}\,\bm{v},~\forall\bm{v}\in [H_0^1(\Omega)]^2.
\]
Hence, using Fortin's trick and the continuous inf-sup condition, we obtain the following discrete version
\[
\sup_{\bm{v}_h\in\bm{V}_h}\frac{b_h(\bm{v}_h,q_h)}{|\bm{v}_h|_{1,h}}\geq C\|q_h\|_0,~\forall q_h\in P_h.
\]
By a standard argument of the mixed finite element method, e.g., Theorem 3.1 in \cite{Zhou2018} or
Theorem 5.2.6 in \cite{Boffi2013}, (\ref{e: stokes discrete}) has a unique solution $(\bm{u}_h,p_h)\in\bm{V}_h\times P_h$.
Moreover,
\begin{equation}
\label{e: abstract error}
\begin{aligned}
|\bm{u}-\bm{u}_h|_{1,h}&\leq C\left(\inf_{\bm{v_h}\in\bm{Z}_h}|\bm{u}-\bm{v}_h|_{1,h}
+\sup_{\bm{w}_h\in\bm{V}_h}\frac{E_{1,h}(\bm{u},p,\bm{w}_h)}{|\bm{w}_h|_{1,h}}\right),\\
\|p-p_h\|_0&\leq \|p-\mathcal{P}_hp\|_0+C\left(\inf_{\bm{v_h}\in\bm{Z}_h}|\bm{u}-\bm{v}_h|_{1,h}
+\sup_{\bm{w}_h\in\bm{V}_h}\frac{E_{1,h}(\bm{u},p,\bm{w}_h)}{|\bm{w}_h|_{1,h}}\right),
\end{aligned}
\end{equation}
where
\[
\bm{Z}_h=\{\bm{v}_h\in\bm{V}_h:~b_h(\bm{v}_h,q_h)=0,~\forall q_h\in P_h\}
=\{\bm{v}_h\in\bm{V}_h:~\mathrm{div}_h\bm{v}_h=0\},
\]
and the consistency term $E_{1,h}(\bm{u},p,\bm{w}_h)$ is given by
\begin{equation}
\label{e: consistency error vector}
\begin{aligned}
E_{1,h}(\bm{u},p,\bm{w}_h)&=b_h(\bm{w}_h,p)-a_h(\bm{u},\bm{w}_h)+(\bm{f},\bm{w}_h)\\
&=\sum_{K\in\mathcal{T}_h}\left(-\int_{\partial K}\frac{\partial\bm{u}}{\partial\bm{n}}\cdot
\bm{w}_h\,\mathrm{d}s+\int_{\partial K}p\bm{w}_h\cdot\bm{n}\,\mathrm{d}s\right).
\end{aligned}
\end{equation}
It follows from (\ref{e: interp err vector}) that
\[
\inf_{\bm{v_h}\in\bm{Z}_h}|\bm{u}-\bm{v}_h|_{1,h}\leq |\bm{u}-\bm{\Pi}_h\bm{u}|_{1,h}\leq Ch^2|\phi|_4
\]
as $\bm{\Pi}_h\bm{u}\in\bm{Z}_h$.
We shall therefore consider the consistency error.
To this end, we rewrite $E_{1,h}(\bm{u},p,\bm{w}_h)$ as
\[
\begin{aligned}
E_{1,h}(\bm{u},p,\bm{w}_h)=&-\sum_{K\in\mathcal{T}_h}
\int_{\partial K}\frac{\partial\bm{u}}{\partial\bm{n}}\cdot
(\bm{w}_h\cdot\bm{n})\bm{n}\,\mathrm{d}s
-\sum_{K\in\mathcal{T}_h}\int_{\partial K}\frac{\partial\bm{u}}{\partial\bm{n}}\cdot
(\bm{w}_h\cdot\bm{t})\bm{t}\,\mathrm{d}s\\
&+\sum_{K\in\mathcal{T}_h}\int_{\partial K}p\bm{w}_h\cdot\bm{n}\,\mathrm{d}s\\
:=&-J_1(\bm{u},\bm{w}_h)-J_2(\bm{u},\bm{w}_h)+J_3(p,\bm{w}_h)
\end{aligned}
\]
In fact, the weak continuity of $\bm{w}_h$ implies
\begin{equation}
\label{e: normal weak stokes}
\int_E\left[\bm{w}_h\cdot\bm{n}_E\right]_E\xi_E\,\mathrm{d}s=0,~\forall \xi_E\in P_1(E),
~\forall E\in\mathcal{E}_h,~\forall \bm{w}_h\in \bm{V}_h.
\end{equation}
Using a similar strategy as in (\ref{e: I4}) one sees
\begin{equation}
\label{e: 1 order otho vector est}
\begin{aligned}
|J_1(\bm{u},\bm{w}_h)|&
\left\{
\begin{aligned}
&\leq Ch^2|\bm{u}|_3|\bm{w}_h|_{1,h}\leq Ch^2|\phi|_4|\bm{w}_h|_{1,h};\\
&\leq Ch^3|\bm{u}|_3|\bm{w}_h|_{2,h}\leq Ch^3|\phi|_4|\bm{w}_h|_{2,h},\\
\end{aligned}
\right.\\
|J_3(p,\bm{w}_h)|&
\left\{
\begin{aligned}
&\leq Ch^2|p|_2|\bm{w}_h|_{1,h};\\
&\leq Ch^3|p|_2|\bm{w}_h|_{2,h},
\end{aligned}
\right.
\end{aligned}
\end{equation}
thus it remains to estimate $J_2(\bm{u},\bm{w}_h)$.

If we write $\bm{u}=(u_1,u_2)^T$ and $\bm{w}_h=(w_{h1},w_{h2})^T$,
notice from the weak continuity of $\bm{w}_h$ that
\begin{equation}
\label{e: J2}
\begin{aligned}
J_2(\bm{u},\bm{w}_h)=&\sum_{K\in\mathcal{T}_h}\left(
\int_{E_{3,K}}\frac{\partial u_1}{\partial y}
\left(w_{h1}-\mathcal{P}_0^{E_{3,K}}w_{h1}\right)\,\mathrm{d}s
-\int_{E_{1,K}}\frac{\partial u_1}{\partial y}
\left(w_{h1}-\mathcal{P}_0^{E_{1,K}}w_{h1}\right)\,\mathrm{d}s\right)\\
&+\sum_{K\in\mathcal{T}_h}\left(
\int_{E_{2,K}}\frac{\partial u_2}{\partial x}
\left(w_{h2}-\mathcal{P}_0^{E_{2,K}}w_{h2}\right)\,\mathrm{d}s
-\int_{E_{4,K}}\frac{\partial u_2}{\partial x}
\left(w_{h2}-\mathcal{P}_0^{E_{4,K}}w_{h2}\right)\,\mathrm{d}s\right)\\
:=&\sum_{K\in\mathcal{T}_h}J_{1,K}(u_1,w_{h1})+\sum_{K\in\mathcal{T}_h}J_{2,K}(u_2,w_{h2}).
\end{aligned}
\end{equation}
Let us again focus only on $J_{1,K}(u_1,w_{h1})$.
On one hand,
\begin{equation}
\label{e: bilinear vector est 1}
|J_{1,K}(u_1,w_{h1})|\leq Ch\left|\frac{\partial u_1}{\partial y}\right|_{1,K}|w_{h1}|_{1,K}.
\end{equation}
On the other hand, define
\[
J'_{1,K}(u_1,w_{h1})=\frac{h_x^2}{12}\int_K\frac{\partial^2u_1}{\partial x\partial y}\frac{\partial^2w_{h1}}{\partial x\partial y}+\frac{\partial^3u_1}{\partial x^2\partial y}\frac{\partial w_{h1}}{\partial y}\,\mathrm{d}x\mathrm{d}y,
\]
then
\begin{equation}
\label{e: bilinear vector est 2}
|J'_{1,K}(u_1,w_{h1})|\leq Ch\left(\left|\frac{\partial u_1}{\partial y}\right|_{1,K}+h\left|\frac{\partial u_1}{\partial y}\right|_{2,K}\right)|w_{h1}|_{1,K}.
\end{equation}
Moreover, it follows from the definition of $\bm{V}_K$ that $w_{h1}|_K\in P_2(K)\oplus\mathrm{span}\{y^3\}$,
and hence,
\[
w_{h1}|_{E_{i,K}}\in P_2(E_{i,K}),~i=1,3.
\]
The Simpson quadrature rule ensures
\begin{equation}
\label{e: wh1 relation vector}
\frac{1}{h_x}\int_{E_{i,K}}w_{h1}\xi_{i,K}\,\mathrm{d}s=\frac{1}{6}\left(w_{h1}(V_{i'',K})-w_{h1}(V_{i',K})\right),
\end{equation}
where $i''=3$ and $i'=4$ for $i=3$, and $i''=2$ and $i'=1$ for $i=1$.
An argument just like in (\ref{e: normal relation scalar}) and (\ref{e: derivation x scalar}) leads to
\begin{equation}
\label{e: bilinear vector est 3}
J_{1,K}(u_1,w_{h1})-J'_{1,K}(u_1,w_{h1})=0,~\forall \frac{\partial u_1}{\partial y}\in P_1(K),
~\forall w_{h1}\in P_2(K)\oplus\mathrm{span}\{y^3\}.
\end{equation}
Collecting (\ref{e: bilinear vector est 1}), (\ref{e: bilinear vector est 2}) and (\ref{e: bilinear vector est 3})
with the Bramble-Hilbert lemma gives
\begin{equation}
\label{e: bilinear vector est 4}
|J_{1,K}(u_1,w_{h1})-J'_{1,K}(u_1,w_{h1})|\leq Ch^2\left|\frac{\partial u_1}{\partial y}\right|_{2,K}|w_{h1}|_{1,K}
\leq Ch^2|\bm{u}|_{3,K}|\bm{w}_h|_{1,K}.
\end{equation}
Substituting (\ref{e: bilinear vector est 4}) into (\ref{e: J2}) results in
\begin{equation}
\label{e: J1K}
\sum_{K\in\mathcal{T}_h}J_{1,K}(u_1,w_{h1})\leq \sum_{K\in\mathcal{T}_h}J'_{1,K}(u_1,w_{h1})+Ch^2|\phi|_4|\bm{w}_h|_{1,h}.
\end{equation}

Let us turn to the right-hand side in (\ref{e: J1K}).
We need to define the following interpolation operator $\Pi_{E_{i,K}}$ from $P_2(K)\oplus\mathrm{span}\{y^3\}$ to $P_1(E_{i,K})$
by setting
\[
\begin{aligned}
\int_{E_{i,K}}\Pi_{E_{i,K}}v\,\mathrm{d}s&=\int_{E_{i,K}}v\,\mathrm{d}s,\\
\int_{E_{i,K}}\left(\Pi_{E_{i,K}}v\right)\xi_{i,K}\,\mathrm{d}s&=\int_{E_{i,K}}v\xi_{i,K}\,\mathrm{d}s,\\
\forall v\in P_2(K)\oplus\mathrm{span}&\{y^3\},~i=2,4.
\end{aligned}
\]
A simple calculation shows that this interpolation is well-defined.
Moreover,
\begin{equation}
\label{e: key vector 2}
v|_{E_{2,K}}-\Pi_{E_{2,K}}v=v|_{E_{4,K}}-\Pi_{E_{4,K}}v,~\forall v\in P_2(K)\oplus\mathrm{span}\{y^3\}.
\end{equation}
Hence, according to the weak continuity of the normal components of $\bm{w}_h$ over $E_{2,K}$ and $E_{4,K}$,
it holds that
\[
\begin{aligned}
\sum_{K\in\mathcal{T}_h}J'_{1,K}(u_1,w_{h1})
&=\frac{h_x^2}{12}\sum_{K\in\mathcal{T}_h}\int_{y'_K}^{y''_K}\frac{\partial^2u_1}{\partial x\partial y}\frac{\partial w_{h1}}{\partial y}\bigg|_{x=x''_K}-\frac{\partial^2u_1}{\partial x\partial y}\frac{\partial w_{h1}}{\partial y}\bigg|_{x=x'_K}\,\mathrm{d}y\\
&=\frac{h_x^2}{12}\sum_{K\in\mathcal{T}_h}\Bigg(\int_{E_{2,K}}\left(\frac{\partial^2u_1}{\partial x\partial y}
-\mathcal{P}_0^K\frac{\partial^2u_1}{\partial x\partial y}\right)
\left(\frac{\partial w_{h1}}{\partial y}-\frac{\partial\Pi_{E_{2,K}}w_{h1}}{\partial y}\right)\,\mathrm{d}s\\
&~~~~~~~~~~~~~~~~~~-\int_{E_{4,K}}\left(\frac{\partial^2u_1}{\partial x\partial y}
-\mathcal{P}_0^K\frac{\partial^2u_1}{\partial x\partial y}\right)
\left(\frac{\partial w_{h1}}{\partial y}-\frac{\partial\Pi_{E_{4,K}}w_{h1}}{\partial y}\right)\,\mathrm{d}s\Bigg).
\end{aligned}
\]
Hence,
\begin{equation}
\label{e: tangential estimate stokes}
\left|\sum_{K\in\mathcal{T}_h}J'_{1,K}(u_1,w_{h1})\right|
\left\{
\begin{aligned}
&\leq Ch^2\sum_{K\in\mathcal{T}_h}Ch^{1/2}|u_1|_{3,K}Ch^{-1}Ch^{1/2}|w_{h1}|_{1,K}
\leq Ch^2|\bm{u}|_3|\bm{w}_h|_{1,h},\\
&\leq Ch^2\sum_{K\in\mathcal{T}_h}Ch^{1/2}|u_1|_{3,K}Ch^{-1}Ch^{3/2}|w_{h1}|_{2,K}
\leq Ch^3|\bm{u}|_3|\bm{w}_h|_{2,h},
\end{aligned}
\right.
\end{equation}
and therefore by (\ref{e: J1K}),
\[
\left|\sum_{K\in\mathcal{T}_h}J_{1,K}(u_1,w_{h1})\right|\leq Ch^2|\phi|_4|\bm{w}_h|_{1,h}.
\]
An almost identical argument applies to $J_{2,K}(u_2,w_{h2})$ along with (\ref{e: J2}) gives
\[
|J_2(\bm{u},\bm{w}_h)|\leq Ch^2|\phi|_4|\bm{w}_h|_{1,h},
\]
and then
\[
|E_{1,h}(\bm{u},p,\bm{w}_h)|\leq Ch^2(|\phi|_4+|p|_2)|\bm{w}_h|_{1,h},
\]
which establishes the error estimate for $|\bm{u}-\bm{u}_h|_{1,h}$ due to (\ref{e: abstract error}).
Again owing to (\ref{e: abstract error}), the estimate for the pressure is simply based on the fact above and the approximation order $O(h)$ of the projection $\mathcal{P}_h$.
\end{proof}

From Theorem \ref{th: converge stokes}, the velocity error has a higher accuracy of order $O(h^2)$,
while the pressure can only achieve an $O(h)$ convergence rate,
as the approximation error is only $O(h)$.
Nevertheless, this can be improved by a simple postprocessing
based on the idea from \cite{Falk2013} or \cite{Neilan2018}.
For any $K\in\mathcal{T}_h$, we define $p_K^*\in P_1(K)$ by setting
\[
\begin{aligned}
(\nabla p^*_K,\nabla q)_K &= (\Delta\bm{u}_h+\bm{f},\nabla q)_K,~\forall q \in P_1(K),\\
\int_K p^*_K \,\mathrm{d}x\mathrm{d}y&=\int_K p_h\,\mathrm{d}x\mathrm{d}y.
\end{aligned}
\]
Then the postprocessed solution $p_h^*$ is discontinuous and piecewise linear,
defined via $p_h^*|_K=p_K^*$, $\forall K\in\mathcal{T}_h$.
The following result can be derived in a similar manner as Theorem 4.4 in \cite{Neilan2018},
and so the proof is omitted.

\begin{theorem}
\label{th: post p}
Under the same assumptions in Theorem \ref{th: converge stokes}, we have
\[
\|p-p_h^*\|_0\leq Ch^2(|\phi|_4+|p|_2).
\]
\end{theorem}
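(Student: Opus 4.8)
The plan is to estimate $\|p-p_h^*\|_0^2=\sum_{K\in\mathcal{T}_h}\|p-p_K^*\|_{0,K}^2$ cell by cell, comparing the piecewise linear $p_K^*$ with a local linear approximation of $p$. For each $K$ I would use the local Ritz projection $\pi_K p\in P_1(K)$ defined by $(\nabla\pi_K p,\nabla q)_K=(\nabla p,\nabla q)_K$ for all $q\in P_1(K)$ together with $\int_K\pi_K p\,\mathrm{d}x\mathrm{d}y=\int_K p\,\mathrm{d}x\mathrm{d}y$; it reproduces $P_1(K)$, is $H^1$-stable, and a scaling argument with the Bramble-Hilbert lemma gives $\|p-\pi_K p\|_{0,K}\le Ch^2|p|_{2,K}$. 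It then remains to control $\delta_K:=\pi_K p-p_K^*\in P_1(K)$, which I would split into its average $\overline{\delta}_K$ over $K$ and its oscillation, using the Poincar\'e-type bound $\|\delta_K-\overline{\delta}_K\|_{0,K}\le Ch|\delta_K|_{1,K}$.

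The average is the easy part: since $\pi_K p$ and $\mathcal{P}_h p|_K$ both reproduce $\frac{1}{|K|}\int_K p$ while $p_K^*$ has average $p_h|_K$, one gets $\overline{\delta}_K=(\mathcal{P}_h p-p_h)|_K$, hence $\sum_K|K|\,\overline{\delta}_K^2=\|\mathcal{P}_h p-p_h\|_0^2$. To bound the latter I would record the superconvergence of the discrete pressure onto $P_h$: subtracting the consistency identity from the discrete momentum equation and using that $b_h(\bm{w}_h,p-\mathcal{P}_h p)=(\mathrm{div}_h\bm{w}_h,p-\mathcal{P}_h p)=0$ because $\mathrm{div}_h\bm{w}_h\in P_h$, one obtains $b_h(\bm{w}_h,p_h-\mathcal{P}_h p)=a_h(\bm{u}_h-\bm{u},\bm{w}_h)-E_{1,h}(\bm{u},p,\bm{w}_h)$ for all $\bm{w}_h\in\bm{V}_h$. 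The discrete inf-sup condition established in the proof of Theorem~\ref{th: converge stokes}, combined with the velocity estimate $|\bm{u}-\bm{u}_h|_{1,h}\le Ch^2(|\phi|_4+|p|_2)$ and the consistency bound $|E_{1,h}(\bm{u},p,\bm{w}_h)|\le Ch^2(|\phi|_4+|p|_2)|\bm{w}_h|_{1,h}$ proved there, then yields $\|\mathcal{P}_h p-p_h\|_0\le Ch^2(|\phi|_4+|p|_2)$.

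The oscillation is the crux. Subtracting the defining relations of $\pi_K p$ and $p_K^*$ and using the strong form $\Delta\bm{u}=\nabla p-\bm{f}$ of the Stokes system gives $(\nabla\delta_K,\nabla q)_K=(\Delta(\bm{u}-\bm{u}_h),\nabla q)_K$ for every $q\in P_1(K)$; since $\nabla q$ runs over all constant vectors this forces $\nabla\delta_K=\frac{1}{|K|}\int_K\Delta(\bm{u}-\bm{u}_h)\,\mathrm{d}x\mathrm{d}y$, whence $|\delta_K|_{1,K}\le\|\Delta(\bm{u}-\bm{u}_h)\|_{0,K}$ by the Cauchy-Schwarz inequality. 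It therefore suffices to show $\|\Delta(\bm{u}-\bm{u}_h)\|_{0,h}\le Ch(|\phi|_4+|p|_2)$, which I would obtain by inserting $\bm{\Pi}_h\bm{u}$: for the smooth part, $|\bm{u}-\bm{\Pi}_h\bm{u}|_{2,h}=|\curl\,\phi-\curl_h\mathcal{I}_h\phi|_{2,h}\le C|\phi-\mathcal{I}_h\phi|_{3,h}\le Ch|\phi|_4$ by the commutative diagram (\ref{e: discrete complex}) and the $k=4,\,j=3$ case of (\ref{e: interp err scalar}); for the discrete part, an elementwise inverse inequality on $\bm{V}_K$ gives $\|\Delta(\bm{\Pi}_h\bm{u}-\bm{u}_h)\|_{0,h}\le Ch^{-1}|\bm{\Pi}_h\bm{u}-\bm{u}_h|_{1,h}\le Ch^{-1}\big(|\bm{u}-\bm{\Pi}_h\bm{u}|_{1,h}+|\bm{u}-\bm{u}_h|_{1,h}\big)\le Ch(|\phi|_4+|p|_2)$ by (\ref{e: interp err vector}) and Theorem~\ref{th: converge stokes}. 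Combining the two parts, $\sum_K\|\delta_K\|_{0,K}^2\le 2\sum_K|K|\,\overline{\delta}_K^2+2Ch^2\sum_K|\delta_K|_{1,K}^2\le Ch^4(|\phi|_4+|p|_2)^2$, which together with $\sum_K\|p-\pi_K p\|_{0,K}^2\le Ch^4|p|_2^2$ completes the proof. I expect the genuine difficulty to be the estimate $\|\Delta(\bm{u}-\bm{u}_h)\|_{0,h}=O(h)$: it is not literally among the stated results, and its proof rests precisely on trading the missing global $H^2$-regularity of $\bm{u}_h$ for an inverse inequality applied to the superconvergent difference $\bm{\Pi}_h\bm{u}-\bm{u}_h$, which is only legitimate because the velocity error is already known to be $O(h^2)$; the pressure superconvergence $\|\mathcal{P}_h p-p_h\|_0=O(h^2)$ is routine by comparison.
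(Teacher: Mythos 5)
Your proof is correct and is essentially the argument the paper itself invokes (the paper omits the proof, deferring to Theorem 4.4 of Neilan--Sap \cite{Neilan2018}, whose postprocessing analysis follows exactly this pattern): a local $P_1$ Ritz projection of $p$, a splitting of the defect into its cell mean --- controlled by the superconvergence $\|\mathcal{P}_hp-p_h\|_0=O(h^2)$ from the inf-sup condition --- and its gradient --- controlled via $\nabla p=\Delta\bm{u}+\bm{f}$ and the bound $\|\Delta(\bm{u}-\bm{u}_h)\|_{0,h}=O(h)$ obtained by inserting $\bm{\Pi}_h\bm{u}$ and applying an inverse inequality to the superconvergent difference $\bm{\Pi}_h\bm{u}-\bm{u}_h$. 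All the ingredients you use (the commutative diagram for $|\bm{u}-\bm{\Pi}_h\bm{u}|_{2,h}\le Ch|\phi|_4$, the $O(h^2)$ velocity and consistency bounds from Theorem \ref{th: converge stokes}) are available under the stated hypotheses, so the argument is complete.
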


In what follows, we shall provide the $L^2$-error estimate for the velocity by a duality argument.
To begin with, an $H(\mathrm{div})$-conforming but $H^1$-nonconforming element will be constructed using $W_h^*$.
Over each $K\in\mathcal{T}_h$, the shape function space $\bm{V}_K^*$ is defined by
\[
\begin{aligned}
\bm{V}_K^*&=[P_1(K)]^2+\mathrm{span}\{\curl\,w,~w\in W_K^*\}\\
&=\mathrm{span}\left\{(x,y)^T\right\}\oplus\mathrm{span}\{\curl\,w,~w\in W_K^*\},
\end{aligned}
\]
and the DoF set is given as
\[
\Sigma_K^*=\left\{\bm{v}(V_{i,K}),\int_{E_{i,K}}\bm{v}\,\mathrm{d}s,~i=1,2,3,4\right\},~\forall \bm{v}\in \bm{V}_K^*.
\]
Using the technique provided in Theorem 2.5 in \cite{Zhou2018} and the unisolvency of $W_K^*$ with respect to $T_K^*$,
the element $(K,\bm{V}_K^*,\Sigma_K^*)$ is well-defined.
By the Simpson quadrature rule applied to $(x,y)^T$ and (\ref{e: linear relations scalar}),
the following relations hold:
\begin{equation}
\label{e: linear relations Stokes}
\begin{aligned}
\frac{1}{h_x}\int_{E_{i,K}}v_1\xi_{i,K}\,\mathrm{d}s
&=\frac{1}{6}\left(v_1(V_{i'',K})-v_1(V_{i',K})\right),~i=1,3,\\
\frac{1}{h_y}\int_{E_{j,K}}v_2\xi_{j,K}\,\mathrm{d}s
&=\frac{1}{6}\left(v_2(V_{j'',K})-v_2(V_{j',K})\right),~j=2,4,
~\forall \bm{v}=(v_1,v_2)^T\in \bm{V}_K^*
\end{aligned}
\end{equation}
with $i''=3$ and $i'=4$ for $i=3$, and $i''=2$ and $i'=1$ for $i=1$;
$j''=3$ and $j'=2$ for $j=2$, and $j''=4$ and $j'=1$ for $j=4$.
The global finite element space $\bm{V}_h^*$ is set as
\[
\begin{aligned}
\bm{V}_h^*=\Bigg\{\bm{v}&\in [L^2(\Omega)]^2:~\bm{v}|_K\in \bm{V}_K^*,~\forall K\in\mathcal{T}_h,
~\mbox{$\bm{v}$ is continuous at all}\\
&\mbox{$V\in\mathcal{V}_h^i$ and vanishes at all $V\in\mathcal{V}_h^b$},
~\int_E\left[\bm{v}\right]_E\,\mathrm{d}s=\bm{0}
~\mbox{for all $E\in\mathcal{E}_h$}\Bigg\}.
\end{aligned}
\]
Clearly, $\bm{V}_h^*\in \bm{H}_0(\mathrm{div};\Omega)$.
For a fixed $K$, the interpolation $\bm{\Pi}_K^*$ from $[H^2(K)]^2$ to $\bm{V}_K^*$ is defined such that $\sigma(\bm{\Pi}_K^*\bm{v})=\sigma(\bm{v})$,
$\forall \bm{v}\in [H^2(K)]^2$, $\forall \sigma\in \Sigma_K^*$.
The global version $\bm{\Pi}_h^*$ from $[H^2(\Omega)\cap H_0^1(\Omega)]^2$ to $\bm{V}_h^*$ is naturally determined by $\bm{\Pi}_h^*|_K=\bm{\Pi}_K^*$, $\forall K\in\mathcal{T}_h$.

\begin{remark}
Using a standard argument (see e.g.~Theorem 4.1 in \cite{Zhou2018}),
we have constructed the following discrete Stokes complex:
\[
\begin{tikzcd}[column sep=large, row sep=large]
0 \arrow{r} & W_h^* \arrow{r}{\curl_h}
& \bm{V}_h^* \arrow{r}{\mathrm{div}_h}& P_h\arrow{r}&0.
\end{tikzcd}
\]
\end{remark}

\begin{theorem}
\label{th: converge stokes L2}
Let the solution domain $\Omega$ be convex.
In addition, under the same assumptions in Theorem \ref{th: converge stokes}, we have
\[
\|\bm{u}-\bm{u}_h\|_0\leq Ch^3(|\phi|_4+|p|_2).
\]
\end{theorem}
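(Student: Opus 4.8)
The plan is to reduce the $L^2$ velocity estimate to a discrete $H^1$ estimate for a scalar potential and then mimic the duality argument of Theorem~\ref{th: converge scalar H1}. Since $\mathrm{div}_h\bm u_h\in P_h$ and $b_h(\bm u_h,q_h)=0$ for all $q_h\in P_h$, one gets $\mathrm{div}_h\bm u_h=0$, so by the exactness of the second row of~(\ref{e: discrete complex}) there is a unique $\widetilde w_h\in W_h$ with $\bm u_h=\curl_h\widetilde w_h$. Using the elementary identities $\|\curl v\|_{0,K}=|v|_{1,K}$, $|\curl_h v_h|_{j,h}=|v_h|_{j+1,h}$ and $\nabla^2a:\nabla^2b=\nabla\curl a:\nabla\curl b$, together with $\bm u=\curl\,\phi$, and setting $e_h:=\phi-\widetilde w_h$, one obtains
\[
\|\bm u-\bm u_h\|_0=|e_h|_{1,h},\qquad |e_h|_{2,h}=|\bm u-\bm u_h|_{1,h}\le Ch^2(|\phi|_4+|p|_2),
\]
the last bound being Theorem~\ref{th: converge stokes}. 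Moreover, testing~(\ref{e: stokes discrete}) with $\bm v_h=\curl_h v_h\in\bm Z_h$ ($v_h\in W_h$), using $b_h(\curl_h v_h,\cdot)=0$ and~(\ref{e: consistency error vector}), yields the Galerkin orthogonality
\[
\sum_{K\in\mathcal T_h}(\nabla^2e_h,\nabla^2v_h)_K=-E_{1,h}(\bm u,p,\curl_h v_h),\qquad\forall v_h\in W_h,
\]
which plays exactly the role of the relation $\sum_K(\nabla^2e_h,\nabla^2v_h)_K=E_{2,h}(u,v_h)$ in the proof of Theorem~\ref{th: converge scalar H1}.

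Next I would introduce the dual problem: find $\psi\in H_0^2(\Omega)\cap H^3(\Omega)$ with $(\nabla^2\psi,\nabla^2v)=\sum_K(\nabla e_h,\nabla v)_K$ for all $v\in H_0^2(\Omega)$, so that $|\psi|_3\le C|e_h|_{1,h}$ by convexity of $\Omega$. Split
\[
|e_h|_{1,h}^2=\sum_K(\nabla e_h,\nabla(e_h-\mathcal I^{S_2}_h e_h))_K+\sum_K(\nabla e_h,\nabla\mathcal I^{S_2}_h e_h)_K ;
\]
here $\mathcal I^{S_2}_h e_h\in H_0^1(\Omega)$ is well defined because the vertex values and edge integrals of $e_h$ are continuous (those of $\phi$ are, and $\widetilde w_h\in W_h$), and $\mathcal I^{S_2}_h$ preserves $P_2$ cellwise. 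The first summand is $\le Ch|e_h|_{1,h}|e_h|_{2,h}\le Ch^3(|\phi|_4+|p|_2)|e_h|_{1,h}$. For the second, a density argument and integration by parts exactly as in~(\ref{e: dual error part 2 main scalar}) give
\[
\Bigl|\sum_K(\nabla e_h,\nabla\mathcal I^{S_2}_he_h)_K\Bigr|\le Ch|\psi|_3|e_h|_{2,h}+|I_3(\psi,e_h)|+|I_4(\psi,e_h)|+\Bigl|\sum_K(\nabla^2\psi,\nabla^2e_h)_K\Bigr| .
\]
Since $\phi\in H_0^2(\Omega)$ is globally smooth, $I_3(\psi,\phi)=I_4(\psi,\phi)=0$; using the weak continuity~(\ref{e: normal weak scalar}), (\ref{e: tangential weak scalar}) of $\widetilde w_h$ (hence of $e_h$) and the zero‑mean trick, the jumps of $\partial e_h/\partial\bm n$ and $\partial e_h/\partial\bm t$ across an edge are bounded by $Ch^{1/2}$ times local $H^2$‑seminorms of $e_h$, whence $|I_3(\psi,e_h)|+|I_4(\psi,e_h)|\le Ch|\psi|_3|e_h|_{2,h}$. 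Thus all these terms are $O\bigl(h^3(|\phi|_4+|p|_2)|e_h|_{1,h}\bigr)$.

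The heart of the matter is $\sum_K(\nabla^2\psi,\nabla^2e_h)_K$, which I would treat exactly as in Theorem~\ref{th: converge scalar H1}. Set $\psi^*=\mathcal I_h^*\psi\in W_h^*$, so that $\mathcal I_h\psi^*\in W_h$, and write
\[
\sum_K(\nabla^2\psi,\nabla^2e_h)_K=\sum_K(\nabla^2\mathcal I_h\psi^*,\nabla^2e_h)_K+\sum_K(\nabla^2(\psi-\mathcal I_h\psi^*),\nabla^2e_h)_K .
\]
The second term is $\le C|\psi-\mathcal I_h\psi^*|_{2,h}|e_h|_{2,h}\le Ch|\psi|_3\cdot Ch^2(|\phi|_4+|p|_2)$, and by the Galerkin orthogonality the first equals $-E_{1,h}(\bm u,p,\curl_h\mathcal I_h\psi^*)$ (one may note $\curl_h\psi^*\in\bm V_h^*$ and $\curl_h\mathcal I_h\psi^*=\bm\Pi_h(\curl_h\psi^*)$, which is where the auxiliary element $\bm V_h^*$ enters). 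Now I would reuse the consistency analysis of Theorem~\ref{th: converge stokes}, decomposing $E_{1,h}(\bm u,p,\curl_h\mathcal I_h\psi^*)=-J_1-J_2+J_3$: the normal contributions $J_1$ and $J_3$ are controlled, via the first‑order weak continuity~(\ref{e: normal weak stokes}) of $\curl_h\mathcal I_h\psi^*\in\bm V_h$, by $Ch^3(|\phi|_4+|p|_2)|\curl_h\mathcal I_h\psi^*|_{2,h}=Ch^3(|\phi|_4+|p|_2)|\mathcal I_h\psi^*|_{3,h}\le Ch^3(|\phi|_4+|p|_2)|\psi|_3$.

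The genuine obstacle is the tangential contribution $J_2(\bm u,\curl_h\mathcal I_h\psi^*)$: a crude bound on it only yields $O(h^2)$ for the $L^2$ velocity error, because $J_2$ reflects the merely zeroth‑order tangential weak continuity of $\bm V_h$. Recovering $O(h^3)$ requires precisely the uniform‑mesh superconvergence mechanism of Theorem~\ref{th: converge scalar H1}: one splits $J_2$ through $\mathcal I_h\psi^*-\psi^*$ and $\psi^*$, bounds the first piece, as in~(\ref{e: I1K dual})–(\ref{e: I2K final dual}), by the $I_{1,K}-I'_{1,K}$ estimate together with the vanishing of $\sum_K I'_{1,K}(\cdot,\psi^*)$ (because $\psi^*\in H_0^1(\Omega)$) and the argument of~(\ref{e: tangential estimate scalar}) (which rests on the identity~(\ref{e: key 2}) and the uniformity of $\mathcal T_h$), and the second piece by the first‑order $\mathcal P_1^E$‑orthogonality of $W_h^*$ encoded in the linear relations~(\ref{e: linear relations scalar}); this gives $|J_2(\bm u,\curl_h\mathcal I_h\psi^*)|\le Ch^3|\phi|_4|\psi|_3$. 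Altogether $\bigl|\sum_K(\nabla^2\psi,\nabla^2e_h)_K\bigr|\le Ch^3(|\phi|_4+|p|_2)|e_h|_{1,h}$, hence $|e_h|_{1,h}^2\le Ch^3(|\phi|_4+|p|_2)|e_h|_{1,h}$, which is the claim. I expect this tangential term to be the main difficulty of the whole argument — the interplay of the auxiliary bilinear form $I'_{1,K}$, the uniform‑mesh cancellation~(\ref{e: key 2}), and the extra orthogonality built into $W_h^*$ is exactly what upgrades the naive $O(h^2)$ to the asserted $O(h^3)$.
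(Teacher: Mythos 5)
Your argument is correct, and it reaches the estimate by a genuinely different route than the paper. The paper works at the velocity level: it poses the \emph{Stokes} dual problem with data $\bm{e}_h=\bm{u}-\bm{u}_h$, uses only the $H^2\times H^1$ regularity $|\bm{\psi}|_2+|\chi|_1\leq C\|\bm{e}_h\|_0$, tests with the composite interpolant $\bm{\Pi}_h\bm{\Pi}_h^*\bm{\psi}$ built from the auxiliary $\bm{H}_0(\mathrm{div})$-conforming space $\bm{V}_h^*$, and reduces everything to the three consistency functionals $J_1,J_2,J_3$, with the tangential term $J_2(\bm{u},\bm{\Pi}_h\bm{\psi}^*)$ handled by the $J_{1,K}-J'_{1,K}$ superconvergence mechanism and the first-order relations (\ref{e: linear relations Stokes}). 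You instead pass to stream functions: exactness of the discrete complex gives $\bm{u}_h=\curl_h\widetilde{w}_h$, the identities $\|\curl_h v\|_{0,h}=|v|_{1,h}$ and $\nabla^2a:\nabla^2b=\nabla\curl a:\nabla\curl b$ convert the $L^2$ velocity error into $|e_h|_{1,h}$ for $e_h=\phi-\widetilde{w}_h$, and the modified Galerkin orthogonality $\sum_K(\nabla^2e_h,\nabla^2v_h)_K=-E_{1,h}(\bm{u},p,\curl_h v_h)$ lets you rerun the proof of Theorem \ref{th: converge scalar H1} verbatim, with $W_h^*$ and $\mathcal{I}_h^*$ playing the roles that $\bm{V}_h^*$ and $\bm{\Pi}_h^*$ play in the paper (the two auxiliary constructions are in fact related by $\bm{V}_K^*=\mathrm{span}\{(x,y)^T\}\oplus\curl W_K^*$, and $J_2(\curl\phi,\curl_h v)$ is exactly the $I_3$-type sum of $I_{1,K}$ and $I_{2,K}$, so your references to (\ref{e: I1K dual})--(\ref{e: I2K final dual}) are the right ones). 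What your route buys is a unification: the Stokes $L^2$ estimate becomes literally the biharmonic discrete $H^1$ estimate, which also makes transparent why the two rates coincide. What it costs is the stronger dual regularity: you need the $H^3$ biharmonic shift theorem on convex polygons (the same one invoked in Theorem \ref{th: converge scalar H1}), whereas the paper's proof of this theorem only needs $H^2\times H^1$ Stokes regularity. Two cosmetic points: the equalities $|\curl_h v|_{j,h}=|v|_{j+1,h}$ for $j\geq 1$ hold only up to absolute constants (the mixed derivative is counted twice), which is harmless; and the operator $\mathcal{I}_h^{S_2}$ must be understood as extended to $H^2$ functions through its vertex-value and edge-integral DoFs so that it applies to $e_h=\phi-\widetilde{w}_h$, exactly as the paper already does for $u-u_h$.
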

\begin{proof}
Set $\bm{e}_h=\bm{u}-\bm{u}_h$ and consider the following dual problem:
Find $\bm{\psi}$ and $\chi$ satisfying
\begin{equation}
\label{e: stokes dual problem}
\begin{aligned}
-\Delta\bm{\psi}+\nabla \chi&=\bm{e}_h~~~~~~&\mbox{in}~\Omega,\\
\mbox{div}\,\bm{\psi}&=0~&\mbox{in}~\Omega,\\
\bm{\psi}&=\bm{0}~&\mbox{on}~\partial\Omega,
\end{aligned}
\end{equation}
with the weak form
\[
\begin{aligned}
(\nabla\bm{\psi},\nabla\bm{v})-(\mathrm{div}\,\bm{v},\chi)&=(\bm{e}_h,\bm{v}),~&\forall \bm{v}\in[H_0^1(\Omega)]^2,\\
(\mathrm{div}\,\bm{\psi},q)&=0,~&\forall q\in L_0^2(\Omega),
\end{aligned}
\]
where $(\bm{\psi},\chi)\in [H_0^1(\Omega)]^2\times L_0^2(\Omega)$.
Owing to the convexity of $\Omega$, it follows from \cite{Grisvard1985} that
$(\bm{\psi},\chi)\in [H^2(\Omega)]^2\times H^1(\Omega)$ yielding the regularity
\begin{equation}
\label{e: dual regular stokes}
|\bm{\psi}|_2+|\chi|_1\leq C\|\bm{e}_h\|_0.
\end{equation}
Multiplying $\bm{e}_h$ on both sides of (\ref{e: stokes dual problem}) and integrating by parts show that
\begin{equation}
\label{e: dual test e_h}
\begin{aligned}
\|\bm{e}_h\|_0^2&=a_h(\bm{\psi},\bm{e}_h)-b_h(\bm{e}_h,\chi)+E_{2,h}(\bm{\psi},\chi,\bm{e}_h)\\
&=a_h(\bm{\psi},\bm{e}_h)+E_{2,h}(\bm{\psi},\chi,\bm{e}_h).
\end{aligned}
\end{equation}
On the other hand,
we define $\bm{\psi}^*=\bm{\Pi}_h^*\bm{\psi}\in\bm{V}_h^*$ and then $\bm{\Pi}_h\bm{\psi}^*\in\bm{V}_h$ in light of the definitions of $\bm{V}_h^*$ and $\bm{V}_h$ and the fact that $\bm{\psi}^*\in\bm{H}_0(\mathrm{div};\Omega)$.
Moreover, on each $K$, by Green's formula,
\[
\begin{aligned}
\mathrm{div}\,(\bm{\Pi}_h\bm{\psi}^*|_K)&=\frac{1}{|K|}\int_K\mathrm{div}\,\bm{\Pi}_h\bm{\psi}^*\,\mathrm{d}x\mathrm{d}y
=\frac{1}{|K|}\int_K\mathrm{div}\,\bm{\psi}^*\,\mathrm{d}x\mathrm{d}y\\
&=\frac{1}{|K|}\int_K\mathrm{div}\,\bm{\psi}\,\mathrm{d}x\mathrm{d}y=0,
\end{aligned}
\]
therefore $\bm{\Pi}_h\bm{\psi}^*\in\bm{Z}_h$.
Take $\bm{v}_h=\bm{\Pi}_h\bm{\psi}^*$ in (\ref{e: stokes discrete}), multiply $\bm{\Pi}_h\bm{\psi}^*$
on both sides of (\ref{e: stokes problem}) and integrate by parts, then
\begin{equation}
\label{e: test pihe_h^*}
a_h(\bm{e}_h,\bm{\Pi}_h\bm{\psi}^*)+E_{1,h}(\bm{u},p,\bm{\Pi}_h\bm{\psi}^*)=0.
\end{equation}
The difference of (\ref{e: dual test e_h}) and (\ref{e: test pihe_h^*}) gives
\begin{equation}
\label{e: basic dual stokes}
\|\bm{e}_h\|_0^2=a_h(\bm{\psi}-\bm{\Pi}_h\bm{\psi}^*,\bm{e}_h)+E_{1,h}(\bm{\psi},\chi,\bm{e}_h)
-E_{1,h}(\bm{u},p,\bm{\Pi}_h\bm{\psi}^*).
\end{equation}
The first term is bounded by the triangle inequality, the Bramble-Hilbert lemma and (\ref{e: converge stokes}):
\begin{equation}
\label{e: final dual 1}
\begin{aligned}
|a_h(\bm{\psi}-\bm{\Pi}_h\bm{\psi}^*,\bm{e}_h)|&\leq |\bm{\psi}-\bm{\Pi}_h\bm{\psi}^*|_{1,h}|\bm{e}_h|_{1,h}\\
&\leq \left(|\bm{\psi}-\bm{\Pi}_h^*\bm{\psi}|_{1,h}+|\bm{\psi}^*-\bm{\Pi}_h\bm{\psi}^*|_{1,h}\right)|\bm{e}_h|_{1,h}\\
&\leq Ch^3(|\phi|_4+|p|_2)|\bm{\psi}|_2.
\end{aligned}
\end{equation}
The second term is estimated using the lowest order inter-element orthogonality
(\ref{e: tangential weak stokes}) and (\ref{e: normal weak stokes}):
\begin{equation}
\label{e: final dual 2}
|E_{1,h}(\bm{\psi},\chi,\bm{e}_h)|\leq Ch(|\bm{\psi}|_2+|\chi|_1)|\bm{e}_h|_{1,h}
\leq Ch^3(|\bm{\psi}|_2+|\chi|_1)(|\phi|_4+|p|_2).
\end{equation}
As far as the last term is concerned, we see from (\ref{e: 1 order otho vector est}) that
\begin{equation}
\label{e: final dual 3}
\begin{aligned}
|E_{1,h}(\bm{u},p,\bm{\Pi}_h\bm{\psi}^*)|&\leq |J_1(\bm{u},\bm{\Pi}_h\bm{\psi}^*)|+|J_2(\bm{u},\bm{\Pi}_h\bm{\psi}^*)|+|J_3(p,\bm{\Pi}_h\bm{\psi}^*)|\\
&\leq Ch^3(|\phi|_4+|p|_2)|\bm{\Pi}_h\bm{\psi}^*|_{2,h}+|J_2(\bm{u},\bm{\Pi}_h\bm{\psi}^*)|\\
&\leq Ch^3(|\phi|_4+|p|_2)|\bm{\psi}|_2+|J_2(\bm{u},\bm{\Pi}_h\bm{\psi}^*)|.
\end{aligned}
\end{equation}
Hence, it suffices to investigate $J_2(\bm{u},\bm{\Pi}_h\bm{\psi}^*)$.

To proceed, we decompose $J_2(\bm{u},\bm{\Pi}_h\bm{\psi}^*)$ by
\begin{equation}
\label{e: J2 dual}
J_2(\bm{u},\bm{\Pi}_h\bm{\psi}^*)=J_2(\bm{u},\bm{\Pi}_h\bm{\psi}^*-\bm{\psi}^*)+J_2(\bm{u},\bm{\psi}^*).
\end{equation}
It follows from (\ref{e: linear relations Stokes}) and the continuity of $\bm{\psi}^*$ at the two endpoints of each edge that
\[
\int_E\left[\bm{\psi}^*\cdot\bm{t}_E\right]_E\xi_E\,\mathrm{d}s=0,~\forall \xi_E\in P_1(E),~\forall E\in\mathcal{E}_h,
\]
therefore
\begin{equation}
\label{e: J2 dual part 2}
|J_2(\bm{u},\bm{\psi}^*)|\leq\sum_{K\in\mathcal{T}_h}Ch^{3/2}|\bm{u}|_{3,K}Ch^{3/2}|\bm{\psi}^*|_{2,K}
\leq Ch^3|\phi|_4|\bm{\psi}|_2.
\end{equation}
If we write $\bm{\Pi}_h\bm{\psi}^*-\bm{\psi}^*=(\psi_{h1}^-,\psi_{h2}^-)^T$ for short,
then
\begin{equation}
\label{e: J2 dual new}
J_2(\bm{u},\bm{\Pi}_h\bm{\psi}^*-\bm{\psi}^*)=\sum_{K\in\mathcal{T}_h}J_{1,K}(u_1,\psi_{h1}^-)
+\sum_{K\in\mathcal{T}_h}J_{2,K}(u_2,\psi_{h2}^-).
\end{equation}
From the first relation in (\ref{e: linear relations Stokes}), (\ref{e: bilinear vector est 3})
and the derivation of (\ref{e: bilinear vector est 3}),
we have
\[
J_{1,K}(u_1,\psi_{h1}^-)-J'_{1,K}(u_1,\psi_{h1}^-)=0,~\forall \frac{\partial u_1}{\partial y}\in P_1(K),
\]
and thus by the Bramble-Hilbert lemma as in (\ref{e: bilinear vector est 4}),
\begin{equation}
\label{e: normal to tangent dual stokes}
\begin{aligned}
\left|J_{1,K}(u_1,\psi_{h1}^-)-J'_{1,K}(u_1,\psi_{h1}^-)\right|
&\leq Ch^2\left|u_1\right|_{3,K}|\psi_{h1}^-|_{1,K}\\
&\leq Ch^3|\bm{u}|_{3,K}|\bm{\Pi}_h\bm{\psi}^*-\bm{\psi}^*|_{1,K}\\
&\leq Ch^3|\bm{u}|_{3,K}|\bm{\psi}^*|_{2,K}\leq Ch^3|\phi|_{4,K}|\bm{\psi}|_{2,K}.
\end{aligned}
\end{equation}
Substituting (\ref{e: normal to tangent dual stokes}) into (\ref{e: J2 dual}) results in
\begin{equation}
\label{e: J1K dual}
\sum_{K\in\mathcal{T}_h}J_{1,K}(u_1,\psi_{h1}^-)
\leq \sum_{K\in\mathcal{T}_h}J'_{1,K}(u_1,\psi_{h1}^-)+Ch^3|\phi|_4|\bm{\psi}|_2.
\end{equation}
Moreover, since $\bm{\psi}^*\in\bm{H}_0(\mathrm{div};\Omega)$,
we find by considering (\ref{e: tangential estimate stokes}) that
\[
\left|\sum_{K\in\mathcal{T}_h}J'_{1,K}(u_1,\psi_{h1}^-)\right|
\leq Ch^3|\bm{u}|_3|\bm{\Pi}_h\bm{\psi}^*|_{2,h}\leq Ch^3|\phi|_4|\bm{\psi}|_2,
\]
and therefore by (\ref{e: J1K dual}),
\[
\left|\sum_{K\in\mathcal{T}_h}J_{1,K}(u_1,\psi_{h1}^-)\right|\leq Ch^3|\phi|_4|\bm{\psi}|_2.
\]
The analysis for $J_{2,K}(u_2,\psi_{h2}^-)$ is similar, which along with (\ref{e: J2 dual}), (\ref{e: J2 dual part 2})
(\ref{e: J2 dual new}) gives
\[
|J_2(\bm{u},\bm{\Pi}_h\bm{\psi}^*)|\leq Ch^3|\phi|_4|\bm{\psi}|_2.
\]
Collecting this result and (\ref{e: final dual 1}), (\ref{e: final dual 2}), (\ref{e: final dual 1}) with
(\ref{e: basic dual stokes}), one gets
\[
\|\bm{e}_h\|_0^2\leq Ch^3(|\phi|_4+|p|_2)(|\bm{\psi}|_2+|\chi|_1)\leq Ch^3(|\phi|_4+|p|_2)\|\bm{e}_h\|_0,
\]
where we have used the regularity (\ref{e: dual regular stokes}).
Finally, this theorem is established by dividing both sides by $\|\bm{e}_h\|_0$.
\end{proof}

\begin{remark}
Again, the mixed finite element designed for Brinkman problems in \cite{Zhou2018} has similar properties
as the pair $\bm{V}_h\times P_h$ in this work,
and so the arguments in Theorems \ref{th: converge stokes} and \ref{th: converge stokes L2} are also valid.
This explains the high accuracy phenomenon observed in the numerical tests
in \cite{Zhou2018} under uniform rectangular partitions.
\end{remark}

\section{Numerical examples}
\label{s: numer ex}

Numerical tests are given in this section.
The solution domain is set as $[0,2]\times[0,1]$,
with uniform $n\times n$ rectangular partitions $\{\mathcal{T}_h\}$ constructed.
It is clear that
\[
h_x=\frac{2}{n},~h_y=\frac{1}{n},~h^2=h_x^2+h_y^2.
\]

We first test $W_h$ for the biharmonic problem (\ref{e: model problem scalar}),
where the exact solution is given by
\begin{equation}
\label{e: example scalar}
u =(3x^2-2y+6xy^2)(x(x-2)y(y-1))^2.
\end{equation}
The errors in various norms are illustrated in Table \ref{t: scalar ours}.
One sees that the convergence orders in discrete $H^2$- and $H^1$-norms are precisely $O(h^2)$ and $O(h^3)$,
respectively, as predicted in Theorems \ref{th: converge scalar} and \ref{th: converge scalar H1}.
The $L^2$ error seems to be of order four, which will be studied in our future work.

\begin{table}[!htb]
\begin{center}
\begin{tabular}{p{1cm}<{\centering}p{2cm}<{\centering}p{1cm}<{\centering}p{2cm}<{\centering}p{1cm}<{\centering}
p{2cm}<{\centering}p{1.5cm}<{\centering}}
\toprule
$n$      &$|u-u_h|_{2,h}$ & order & $|u-u_h|_{1,h}$ & order &$\|u-u_h\|_0$ & order  \\
\midrule
$4$      &1.209E0     &       &7.041E-2   &               &9.209E-3     &       \\
$8$      &3.528E-1    &1.78   &9.173E-3   &2.94           &4.139E-4     &4.48   \\
$16$     &8.880E-2    &1.99   &1.063E-3   &3.11           &2.060E-5     &4.33   \\
$32$     &2.140E-2    &2.05   &1.242E-4   &3.10           &1.202E-6     &4.10   \\
$64$     &5.198E-3    &2.04   &1.487E-5   &3.06           &7.428E-8     &4.02   \\
\bottomrule
\end{tabular}
\caption{The discrete $H^2$, $H^1$ and $L^2$ errors produced by $W_h$ applied to the biharmonic problem determined by (\ref{e: example scalar}) for different $n$.
 \label{t: scalar ours}}
\end{center}
\end{table}

As a comparison, we also check the performance of the well-known Adini element \cite{Lascaux1975} for the same problem.
The numbers of local DoFs are both 12 for both elements,
and the global DoFs of the Adini element are fewer than those of $W_h$.
However, the space $W_h$ is highly nonconforming,
enjoying a cheap local communication when the method is implemented,
especially for parallel computing.
As the error analysis in \cite{Lascaux1975,Luo2004,Hu2016},
the Adini element can also achieve a second order convergence rate in discrete $H^2$-norm.
Moreover, the absolute errors are sightly lower than the those produced by $W_h$ due to the strong continuity in the tangential direction.
However, the errors in discrete $H^1$- and $L^2$-norms are also only $O(h^2)$,
lower than those of $W_h$,
which is consistent with the lower bound estimate given in \cite{Hu2013,Hu2016}.

\begin{table}[!htb]
\begin{center}
\begin{tabular}{p{1cm}<{\centering}p{2cm}<{\centering}p{1cm}<{\centering}p{2cm}<{\centering}p{1cm}<{\centering}
p{2cm}<{\centering}p{1.5cm}<{\centering}}
\toprule
$n$      &$|u-u_h|_{2,h}$ & order & $|u-u_h|_{1,h}$ & order &$\|u-u_h\|_0$ & order  \\
\midrule
$4$      &1.112E0     &       &1.270E-1   &               &2.195E-2     &       \\
$8$      &3.113E-1    &1.84   &3.060E-2   &2.05           &6.283E-3     &1.80   \\
$16$     &7.681E-2    &2.02   &7.642E-3   &2.00           &1.636E-3     &1.94   \\
$32$     &1.901E-2    &2.01   &1.915E-3   &2.00           &4.137E-4     &1.98   \\
$64$     &4.738E-3    &2.00   &4.791E-4   &2.00           &1.037E-4     &2.00   \\
\bottomrule
\end{tabular}
\caption{The discrete $H^2$, $H^1$ and $L^2$ errors produced by the Adini element for the biharmonic problem determined by (\ref{e: example scalar}) for different $n$.
 \label{t: scalar adini}}
\end{center}
\end{table}

We end this work by testing the divergence-free Stokes element $\bm{V}_h\times P_h$ for the model problem (\ref{e: stokes problem}) determined by
\begin{equation}
\label{e: example stokes}
\bm{u}=\curl\left(\exp(x+2y)(x(x-2)y(y-1))^2\right),~p=-\sin2\pi x\sin2\pi y.
\end{equation}
The numerical results are provided in Table \ref{t: stokes}.
One can observe that the convergence orders for the velocity, the pressure and the postprocessed pressure are achieved,
as predicted in Theorems \ref{th: converge stokes} and  \ref{th: post p}.
Moreover,
the $L^2$ errors of the velocity have an $O(h^3)$ convergence order,
which agrees with the assertion in Theorem \ref{th: converge stokes L2}.

\begin{table}[!htb]
\begin{center}
\begin{tabular}{p{0.7cm}<{\centering}p{1.6cm}<{\centering}p{0.8cm}<{\centering}p{1.6cm}<{\centering}p{0.8cm}<{\centering}
p{1.6cm}<{\centering}p{0.8cm}<{\centering}p{1.6cm}<{\centering}p{0.8cm}<{\centering}}
\toprule
$n$      &$|\bm{u}-\bm{u}_h|_{1,h}$ & order & $\|\bm{u}-\bm{u}_h\|_0$ & order
& $\|p-p_h\|_0$ & order  & $\|p-p_h^*\|_0$ & order\\
\midrule
$4$      &2.473E0     &       &1.100E-1   &               &6.569E-1     &       &1.045E0      &    \\
$8$      &7.505E-1    &1.72   &1.657E-2   &2.73           &3.485E-1     &0.91   &3.187E-1     &1.71\\
$16$     &1.968E-1    &1.93   &2.305E-3   &3.03           &1.772E-1     &0.98   &6.173E-2     &2.37\\
$32$     &4.846E-2    &2.02   &2.399E-4   &3.08           &8.932E-2     &0.99   &9.938E-3     &2.64\\
$64$     &1.188E-2    &2.03   &2.878E-5   &3.06           &4.477E-2     &1.00   &1.869E-3     &2.41\\
\bottomrule
\end{tabular}
\caption{The errors for the velocity and the pressure produced by $\bm{V}_h\times P_h$ for the Stokes problem determined by (\ref{e: example stokes}) for different $n$.
 \label{t: stokes}}
\end{center}
\end{table}

\newpage


\begin{thebibliography}{99}
{\small
\addtolength{\itemsep}{-1.5ex}

\bibitem{Argyris1968}
J.H. Argyris, I. Fried, D.W. Scharpf.
The TUBA family of plate elements for the matrix displacement method.
Aero. J. Roy. Aero. Soc., 1968, 72: 701--709.

\bibitem{Boffi2013}
D. Boffi, F. Brezzi, M. Fortin.
Mixed Finite Element Methods and Applications.
Springer, 2013.

\bibitem{Bogner1965}
F.K. Bogner.
The generation of interelement compatible stiffness and mass matrices by the use of interpolation formulae.
In Proc. Conf. Matrix Methods in Struct. Mech., AirForce Inst. of Tech., Wright Patterson AF Base, Ohio, 1965: 397--443.

\bibitem{Christiansen2018}
S.H. Christiansen, K. Hu.
Generalized Finite Element Systems for smooth differential forms and Stokes problem.
Numer. Math., 2018, 2(140): 327--371.

\bibitem{Ciavaldini1974}
J.F. Ciavaldini, J.C. N\'ed\'elec.
Sur l'\'el\'ement de Fraeijs de Veubeke et Sander.
RAIRO Analyse num\'{e}rique, 1974, 8: 29--45.

\bibitem{Clough1965}
R.W. Clough, J.L. Tocher.
Finite Element Stiffness Matrices for Analysis of Plates in Bending.
In Proc. Conf. Matrix Methods in Struct. Mech., AirForce Inst. of Tech., Wright Patterson AF Base, Ohio, 1965: 515--545.

\bibitem{Chen1996}
S. Chen, D. Shi.
12-parameter rectangular plate elements with geometric symmetry.
Numer. Math. J. Chinese Univ., 1996, 3: 233--238. (in Chinese).

\bibitem{Chen2005}
S. Chen, Y. Zhao, D. Shi.
Non $C^0$ nonconforming elements for elliptic fourth order singular perturbation problem.
J. Comput. Math., 2005, 23(2): 185--198.

\bibitem{Chen2012}
H. Chen, S. Chen. $C^0$-nonconforming elements for fourth order elliptic problem.
Math. Numer. Sin., 2012, 35(1): 21--30. (in Chinese).

\bibitem{Chen2013}
H. Chen, S. Chen, Z. Qiao.
$C^0$-nonconforming tetrahedral and cuboid elements for the three-dimensional fourth order elliptic problem.
Numer. Math., 2013, 124(1): 99--119.

\bibitem{Falk2013}
R. Falk and M. Neilan.
Stokes complexes and the construction of stable finite elements with pointwise mass conservation.
SIAM J. Numer. Anal., 2013, 51(2): 1308--1326.

\bibitem{Gao2011}
B. Gao, S. Zhang, M. Wang.
A note on the nonconforming finite elements for elliptic problems.
J. Comput. Math., 2011, 29(2): 215--226.

\bibitem{Gillette2018}
A. Gillette, K. Hu, S. Zhang.
Nonstandard finite element de Rham complexes on cubical meshes.
arXiv preprint, arXiv: 1804.04390, 2018.

\bibitem{Grisvard1985}
P. Grisvard. Elliptic Problems in Nonsmooth Domains. Pitman, Boston, 1985.

\bibitem{Guzman2012}
J. Guzm\'{a}n, M. Neilan. A family of nonconforming elements for the Brinkman problem.
IMA J. Numer. Anal., 2012, 32: 1484--1508.

\bibitem{Hu2013}
J. Hu, Z. Shi.
A lower bound of the $L^2$ norm error estimate for the Adini element of the biharmonic equation.
SIAM J. Numer. Anal., 2013, 51(5): 2651--2659.

\bibitem{Hu2016}
J. Hu, X. Yang, S. Zhang.
Capacity of the Adini element for biharmonic equations.
J. Sci. Comput., 2016, 69(3): 1366--1383.

\bibitem{Hu2016b}
J. Hu, Z. Shi, X. Yang.
Superconvergence of three dimensional Morley elements on cuboid meshes for biharmonic equations.
Adv. Comput. Math., 2016, 42(6): 1453--1471.

\bibitem{John2016}
V. John, A. Linke, C. Merdon, M. Neilan, L.G. Rebholz.
On the divergence constraint in mixed finite element methods for incompressible flows.
SIAM Review, 2017, 59(3): 492--544.

\bibitem{Lascaux1975}
P. Lascaux, P. Lesaint.
Some nonconforming finite elements for the plate bending problems. RAIRO. Anal. Numér., 1975, 9: 9--53.

\bibitem{Luo2004}
P. Luo, Q. Lin.
Accuracy analysis of the Adini element for biharmonic equation.
Acta Math. Sin., 2004, 20(1): 135--146.

\bibitem{Mao2009}
S. Mao, Z. Shi.
High accuracy analysis of two nonconforming plate elements.
Numer. Math., 2009, 111(3): 407--443.

\bibitem{Neilan2016}
M. Neilan, D. Sap.
Stokes elements on cubic meshes yielding divergence-free approximations.
Calcolo, 2016, 53(3): 263--283.

\bibitem{Neilan2018}
M. Neilan, D. Sap.
Macro Stokes elements on quadrilaterals.
Inter J. Numer. Anal. Model., 2018, 15(4-5): 729--745.

\bibitem{Park2013}
C. Park, D. Sheen. A quadrilateral Morley element for biharmonic equations.
Numer. Math., 2013, 124: 395--413.

\bibitem{Shi1986}
Z. Shi.
On the convergence of the incomplete biquadratic nonconforming plate element.
Math. Numer. Sin., 1986, 8: 53--62. (in Chinese).

\bibitem{Shi2000}
Z. Shi,  Q. Chen.
An efficient rectangular plate element.
Sci. China Ser. A, 2000, 44: 145--158.

\bibitem{Verbeke1968}
F.de Verbeke. A conforming finite element for plate bending.
J. Solids Structure, 1968, 108: 4--95.

\bibitem{Wang2012}
M. Wang, P. Zu, S. Zhang.
High accuracy nonconforming finite elements for fourth order problems.
Sci. China Math., 2012, 55(10): 2183--2192.

\bibitem{Zhang2018}
S. Zhang.
On optimal finite element schemes for biharmonic equation.
arXiv preprint, arXiv: 1805.03851, 2018.

\bibitem{Zhou2018}
X. Zhou, Z. Meng, X. Fan, Z. Luo.
Nonconforming polynomial mixed finite element for the Brinkman problem over quadrilateral meshes.
Comput. Math. Appl., 2018, 76(4): 877--892.

}
\end{thebibliography}
\end{document}